\theoremstyle{plain}
\newtheorem{thm}{Theorem}
\newtheorem{lem}[thm]{Lemma}
\newtheorem{coro}[thm]{Corollary}
\theoremstyle{plain}
\theoremstyle{plain}
\theoremstyle{plain}
\title{Integer k-matching preclusion of graphs}
\author{Caibing Chang, Yan Liu\thanks{Corresponding author. \emph{E-mail address:} liuyan@scnu.edu.cn}\\
{\small School of Mathematical Sciences, South China Normal University, Guangzhou, 510631, China}}
\date{}
\begin{document}
\baselineskip 0.65cm

\maketitle
\begin{abstract}
As a generalization of matching preclusion number of a graph, we provide the (strong) integer $k$-matching preclusion number, abbreviated as $MP^{k}$ number ($SMP^{k}$ number), which is the minimum number of edges (vertices and edges) whose deletion results in a graph that has neither perfect integer $k$-matching nor almost perfect integer $k$-matching. In this paper, we show that when $k$ is even, the ($SMP^{k}$) $MP^{k}$ number is equal to the (strong) fractional matching preclusion number.
We obtain a necessary condition of graphs with an almost-perfect integer $k$-matching and a relational expression between the matching number and the integer $k$-matching number of bipartite graphs. Thus the $MP^{k}$ number and the $SMP^{k}$ number of complete graphs, bipartite graphs and arrangement graphs are obtained, respectively.

\noindent {\textbf{Keywords}: Integer $k$-matching; (strong) integer $k$-matching preclusion number; arrangement graphs}
\end{abstract}

\section{Introduction}
\subsection{Definitions and Notations}
All graphs considered in this paper are undirected, finite and simple. Let $G$ be a graph. If $|V(G)|$ is odd (even), then we say that $G$ is odd (even). We refer to the book \cite{1BM} for graph theoretical notations and terminology that are not defined here. For $v\in V(G)$, we denote by $\Gamma(v)$ the set of edges incident with $v$. For two subsets $S,T$ of $V(G)$, let $E_{G}(S,T)$=$\{uv\in E(G)|u\in S, v\in T\}$. A vertex of degree 0 is called an isolated vertex. The number of isolated vertices of $G$ is denoted by $i(G)$. The number of odd components of $G$ is denoted by $c_{o}(G)$. The number of odd components with at least three vertices is denoted by $odd(G)$. Then $c_{o}(G)=i(G)+odd(G)$. A complete graph, a path and a cycle on $n$ vertices are denoted by $K_{n}$, $P_{n}$ and $C_{n}$, respectively.

A matching of $G$ is a subset of $E(G)$ in which no two edges are adjacent. The matching number of $G$, denoted by $\mu(G)$, is the maximum size of matchings. For a matching $M$, a vertex $v$ is $M$-saturated if $\Gamma(v)\cap M\neq \emptyset$, otherwise $v$ is $M$-unsaturated. A matching $M$ is perfect if every vertex is $M$-saturated. A matching $M$ is almost perfect if there exists exactly one  $M$-unsaturated vertex.
 A fractional matching is a function $f$: $E(G)\rightarrow [0,1]$ such that $\sum\limits_{e\in \Gamma(v)}f(e)\leq 1$ for any vertex $v$. Clearly, a fractional matching is a relaxation of matching.
 Let $k$ be a positive integer. An integer $k$-matching of a graph $G$ is a function $h$: $E(G)\rightarrow \{0,1,\ldots,k\}$ such that $\sum_{e\in \Gamma(v)}h(e)\leq k$ for any vertex $v\in V(G)$. Integer $k$-matching is a kind of generalization of matching. In fact, when $k=1$, integer $k$-matching is a matching. An edge is said to be 0-edge if $h(e)=0$.

 An integer $k$-matching is perfect if $\sum\limits_{e\in\Gamma(v)}h(e)=k$ for each vertex $v$, that is, $\sum\limits_{e\in E(G)}h(e)$ =$\frac{1}{2}\sum\limits_{v\in V(G)}$$\sum\limits_{e\in\Gamma(v)}$$h(e)=\frac{k|V(G)|}{2}$. It is clear that if $G$ has a perfect matching, then $G$ has a perfect integer $k$-matching which can be constructed by assigning $k$ to every edge in the perfect matching and assigning $0$ to other edges. But the opposite is not true. For example (see Fig.1), when $k$ is even, $C_{5}$ contains a perfect integer $k$-matching, but $C_{5}$ has no perfect matching. When $k$ is odd, $G_{0}$ contains a perfect integer $k$-matching, but $G_{0}$ has no perfect matching. An integer $k$-matching is almost-perfect \cite{YL} if there exists exactly one vertex $v'$ such that $\sum\limits_{e\in\Gamma(v')}h(e)=k-1$ and $\sum\limits_{e\in\Gamma(v)}h(e)=k$ for each other vertex $v$, that is, $\sum\limits_{e\in E(G)}h(e)$ =$\frac{1}{2}\sum\limits_{v\in V(G)}$$\sum\limits_{e\in\Gamma(v)}$$h(e)=\frac{k|V(G)|-1}{2}$. When $k$ is odd, it is clear that an odd cycle has an almost-perfect integer $k$-matching such that $v$ is unsaturated for any vertex $v$ of the odd cycle (see $C_7$ in Fig. 1).

\begin{figure}[h]
\flushleft
{
\begin{minipage}{9.5cm}
\begin{flushleft}
\includegraphics[scale=0.48]{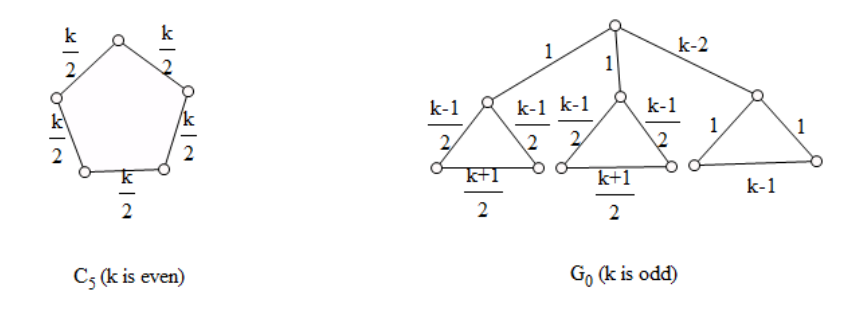}
\end{flushleft}
\end{minipage}
}
{
\begin{minipage}{6.3cm}
\begin{flushright}
\includegraphics[scale=0.55]{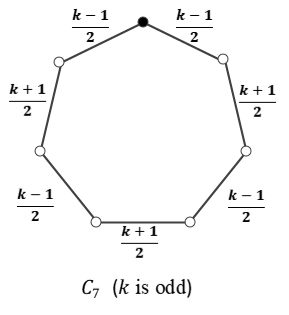}
\end{flushright}
\end{minipage}
}\\
\label{fig1}
~~~~~~~~~~~~~~~~~~~~~~~~~~~~~~~~~~~~~~~~~Fig. 1. (almost) perfect integer $k$-matching
\end{figure}

 Brigham et al \cite{5BHV} introduced the concept of matching preclusion, as a measure of robustness in the event of edge failure in interconnection networks. An edge subset $F$ of $G$ is a matching preclusion set ($MP$ set for short) if $G-F$ has neither perfect matching nor almost-perfect matching. The matching preclusion number of $G$, denoted by $mp(G)$, is the minimum size of $MP$ sets of $G$. An interconnection network with a large matching preclusion number may be considered as more robust in the event of link failures.
Park and Ihm \cite{4JHI} introduced the concept of strong matching preclusion, as a measure of robustness in the event of edge and vertices failure in interconnection networks. A set $F$ of edges and vertices of $G$ is a strong matching preclusion set ($SMP$ set for short) if $G-F$ has neither perfect matching nor almost-perfect matching. The strong matching preclusion number of $G$, denoted by $smp(G)$, is the minimum size of strong matching preclusion sets of G. The (strong) matching preclusion problems have been studied as interconnection network topologies. We refer the readers to \cite{1EL,1LJH,1HLZ, 4JHI,3EJJ} for details.

Liu and Liu \cite{5YWW} introduced the concept of fractional matching preclusion. An edge subset $F$ of $G$ is a fractional matching preclusion set if $G-F$ has no perfect fractional matching. The fractional matching preclusion number of $G$, denoted by $fmp(G)$, is the minimum size of fractional matching preclusion sets of $G$. Fractional matching preclusion is a generalization of matching preclusion. The fractional matching preclusion problems of many interconnection networks have been studied. We refer the readers to \cite{3TYE,4YCM} for details.

In this paper, we consider integer $k$-matching preclusion, which is another generalization of matching preclusion. Chang, Li and Liu \cite{5LCH} have obtained the integer $k$-matching preclusion of twisted cubes and $(n,s)$-star graphs.
\subsection{(Strong) Integer $k$-matching preclusion number}
An edge subset $F$ of $G$ is an $integer$ $k$-$matching$ $preclusion$ $set$ ($MP^{k}$ set for short) if $G-F$ has neither perfect integer $k$-matching nor almost perfect integer $k$-matching. The $integer$ $k$-$matching$ $preclusion$ $number$ ($MP^{k}$ number for short) of $G$, denoted by $mp^{k}(G)$, is the minimum size of $MP^{k}$ sets of $G$. Then $mp^{1}(G)$=$mp(G)$. For any $k\geq2$, $mp^{k}(G)\leq\delta(G)$ since the set of edges incident with $v$ is an $MP^{k}$ set for any vertex $v$.

A set $F$ of edges and vertices of $G$ is a $strong$ $integer$ $k$-$matching$ $preclusion$ $set$ ($SMP^{k}$ set for short) if $G-F$ has neither perfect integer $k$-matching nor almost perfect integer $k$-matching. The $strong$ $integer$ $k$-$matching$ $preclusion$ $number$ ($SMP^{k}$ number for short) of $G$, denoted by $smp^{k}(G)$, is the minimum size of $SMP^{k}$ sets of $G$. Then $smp^{1}(G)$=$smp(G)$. By the definition of $smp^{k}(G)$, we have that $smp^{k}(G)\leq mp^{k}(G)$.

In \cite{5YXL}, the $MP^{k}$ number and $SMP^{k}$ number of twisted cubes and (n, s)-star
graphs are given. In \cite{5YXL}, when $k$ is even, $G$ has a perfect fractional matching if and only if $G$ has a perfect integer $k$-matching. If $G$ has an almost perfect integer $k$-matching, say $h$, then $\sum\limits_{e\in E(G)}h(e)=\frac{k|V(G)|-1}{2}$ is an integer, which means that $k$ is odd and $|V(G)|$ is odd. So every graph has no almost perfect integer $k$-matching for even $k$. It follows that $fmp(G)=mp^{k}(G)$ and $fsmp(G)=smp^{k}(G)$ for even $k$. Hence we only consider the case that $k\geq3$ is odd in this paper. Thus $smp^{k}(G)\leq\delta(G)$ since the set of edges and vertices incident with $v$ or adjacent to $v$ is an $SMP^{k}$ set for any vertex $v$, $G$ is even if $G$ has a perfect integer $k$-matching and $G$ is odd if $G$ has an almost perfect integer $k$-matching.

\begin{figure}[h]\label{Fig:2}
\begin{center}
\includegraphics[scale=0.58]{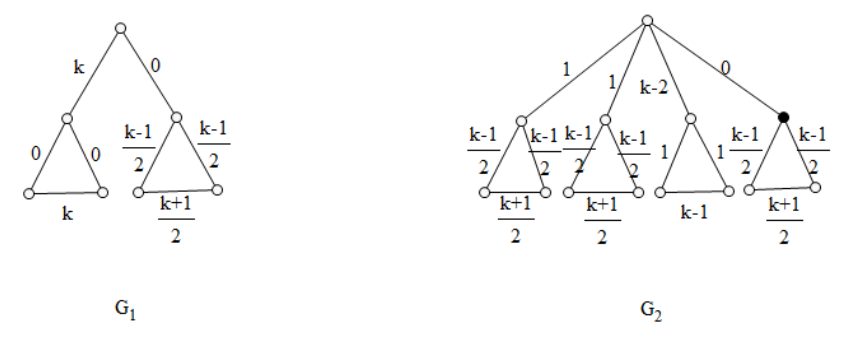}\\
Fig. 2. Examples of graphs
\end{center}
\end{figure}

When $G$ is even, $mp(G)\leq mp^{k}(G)$ by the fact that $G$ has a perfect integer $k$-matching if it has a perfect matching. But when $|V(G)|$ is odd, there is no relationship between $mp(G)$ and $mp^{k}(G)$. For example, $P_{3}$ and $G_{2}$ (see Fig. 2) are odd, $mp(P_{3})\geq 1>mp^{k}(P_{3})=0$, $mp(G_{2})=0<mp^{k}(G_{2})$.
Whatever $|V(G)|$ is even or odd, $smp(G)$ and $smp^{k}(G)$ have no a fixed relationship. For example, $P_{5}$, $G_{1}$ and $G_{2}$ (see Fig. 2) are odd, $smp(G_{1})=smp^{k}(G_{1})=1$, $smp(G_{2})=0<smp^{k}(G_{2})$ and $smp(P_{5})=1>smp^{k}(P_{5})=0$. Note that $C_{4}$ and $G_0$ (see Fig. 1) are even, $smp(C_{4})=2>smp^{k}(C_{4})=1$ and $smp(G_0)=0<smp_k(G_0)$.

Note that many interconnection networks $G$ have usually an even number of vertices and have the property that $mp(G)=\delta(G)$. So we have that $mp(G)=mp^{k}(G)=\delta(G)$. Since the arrangement graph $A_{n,s}$ has the property that $mp(A_{n,s})=\delta(A_{n,s})$ in the case that $2\leq s\leq n-2$, $mp^{k}(A_{n,s})=\delta(A_{n,s})$. So, in this paper, we investigate the $SMP^{k}$ number of arrangement graph $A_{n,s}$ in the case that $2\leq s\leq n-2$, and the $MP^{k}$ numbers and the $SMP^{k}$ number of $A_{n,1}$ and $A_{n,n-1}$, where $A_{n,1}$ is $K_n$ and $A_{n,n-1}$ is bipartite. The definition of arrangement graphs $A_{n,s}$ are given in the following.

\section{Arrangement graphs}

Let $n$ and $s$ be positive integers such that $n\geq2$ and $1\leq s\leq n-1$. The vertex set of arrangement graph $A_{n,s}$ is the set of all permutations on $s$ elements of the set $\{1,2,\ldots,n\}$. Two vertices corresponding to the permutations [$a_{1}$, $a_{2}$, \ldots, $a_{s}$] and [$b_{1}$, $b_{2}$, \ldots, $b_{s}$] are adjacent if and only if they differ in exactly one position. Then $A_{n,1}$ is the complete graph $K_n$. We know that $A_{n,n-1}$ is a bipartite graph (see \cite{8SBAD}) and $A_{n,n-2}$ is isomorphic to the alternating group graph $A_{n}$ (see \cite{8JSJS}). It can be seen easily that $A_{n,s}$ is $s(n-s)$-regular graph with $\frac{n!}{(n-s)!}$ vertices. For example, $A_{4,2}$ is shown in Fig. 3, where every vertex $[i,j]$ of $A_{n,2}$ is denoted by $ij$.
\begin{figure}[h]\label{Fig:3}
\begin{center}
\includegraphics[scale=0.23]{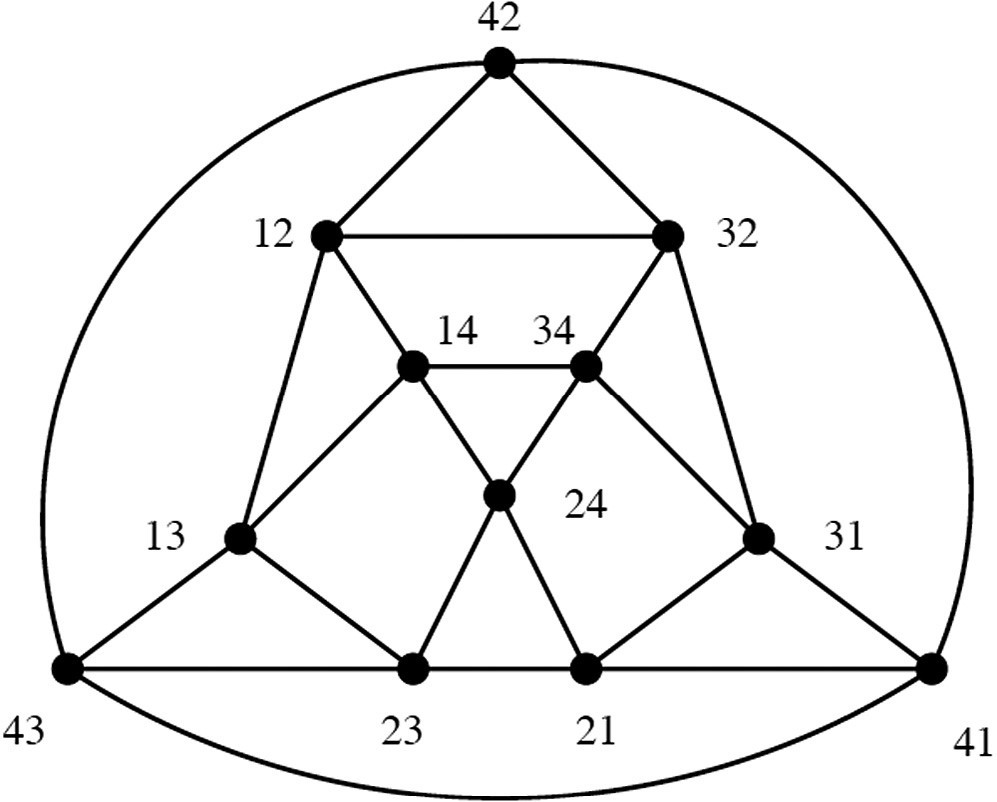}\\
Fig. 3. Arrangement graph $A_{4.2}$.
\end{center}
\end{figure}

The arrangement graphs were introduced in \cite{8KDA} to be a common generalization of star graphs and alternating group graphs, and to provide an even richer class of interconnection networks. Recently, researchers have found that they are excellent candidates as interconnection networks. For more results of arrangement graphs, we refer the readers to \cite{8ECM, 8ECO, 8HCHT}.

Let $V_{i}$ be the set of vertices representing permutations whose $s$th element is $i$ and $H_{i}$ the subgraph of $A_{n,s}$ induced by $V_{i}$ for each $i\in\{1,\cdots,n\}$. Then $H_{i}$ can be seen the arrangement graph defined on permutations of $s-1$ elements of $\{1,2, \ldots, n\}-\{i\}$. Hence $H_{i}$ is isomorphic to $A_{n-1,s-1}$. Thus $H_{i}$ has $\frac{(n-1)!}{(n-s)!}$ vertices for each $1\leq i\leq n$ and $A_{n,s}$ can be decomposed into $n$ subgraphs $H_{i}$. The edges whose two end-vertices belong to different $H_{j}$ are called $cross$ edges. Then there are exactly $\frac{(n-2)!}{(n-s-1)!}$ cross edges between $H_{i}$ and $H_{j}$ for any $1\leq i<j \leq n$ which are not adjacent to each other.
Each vertex [$a_{1}$, $a_{2}$, \ldots, $a_{s-1},\ i $] of $H_{i}$ has exactly $n-s$ neighbors in $V(A_{n,s})-V(H_{i})$, which are $[a_{1}, a_{2}, \ldots, a_{s-1},\ j ]\ (j\in \{1,2,\cdots,n\}-\{a_{1}, a_{2}, \ldots, a_{s-1},\ i\})$ and then they belong to $n-s$ different subgraphs $H_j'$s, called the $outer$ neighbors of [$a_{1}$, $a_{2}$, \ldots, $a_{s-1}$, $i$]. The subgraph $H_i$ is called adjacent to a vertex $u$ or $u$ is adjacent to $H_i$ if $u$ has an outer neighbor in $V(H_i)$. Then there exist exactly $n-s$ subgraphs among $H_1,\cdots,H_{i-1},H_{i+1},\cdots,H_n$ adjacent to $v$ for any vertex $v$ of $H_i$.

\section{Main results}

Note that if $G-F$ has a perfect integer $k$-matching or an almost perfect integer $k$-matching for any $F\subseteq V(G)\cup E(G)$ such that $|F|=t$, then $G-F_{1}$ has also a perfect integer $k$-matching or an almost perfect integer $k$-matching for any $F_{1}\subseteq V(G)\cup E(G)$ such that $|F_{1}|<t$. So $smp^{k}(G)\geq t+1$ if and only if $G-F$ has a perfect integer $k$-matching or an almost perfect integer $k$-matching for any $F\subseteq V(G)\cup E(G)$ such that $|F|=t$.

A Hamiltonian graph $G$ is called $k$-$fault \ Hamiltonian$ if $G-F$ remains Hamiltonian for every $F\subset V(G)\cup E(G)$ with that $|F|\leq k$. A graph is $Hamiltonian$ $connected$ if there is a Hamiltonian path between any two vertices. A Hamiltonian connected graph $G$ is called $k$-$fault \ Hamiltonian \ connected$ if $G-F$ remains Hamiltonian connected for every $F\subset V(G)\cup E(G)$ with that $|F|\leq k$. The following lemmas are useful.

\begin{lem}[\cite{5CYJLH}]\label{lem:3.10}
Let $n\geq 4$ be an integer. Then $K_{n}$ is $(n-3)$-fault Hamiltonian and $(n-4)$-fault Hamiltonian connected.
\end{lem}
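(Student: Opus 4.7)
The plan is to reduce the two assertions to degree-sum bounds on the pruned complete graph and then invoke the classical Ore-type sufficient conditions for Hamiltonicity and Hamiltonian connectedness. Split the fault set as $F=F_V\cup F_E$ with $F_V\subseteq V(K_n)$ and $F_E\subseteq E(K_n)$, and put $a=|F_V|$, $b=|F_E|$. Deleting $F_V$ leaves a complete graph $K_m$ with $m=n-a$, after which one still has to remove $b$ edges. Under the hypothesis $a+b\leq n-3$ this forces $b\leq m-3$ and $m\geq 3$; under $a+b\leq n-4$ it forces $b\leq m-4$ and $m\geq 4$. So both statements reduce to a uniform assertion about $K_m$ minus a few edges.

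For the Hamiltonicity part, I would first dispose of the degenerate case $m=3$, where $b=0$ and $K_3$ is trivially Hamiltonian. For $m\geq 4$, let $H=K_m-F_E$ and pick any non-adjacent pair $u,v$ in $H$; then $uv\in F_E$ necessarily. Writing $e_u$ and $e_v$ for the number of edges of $F_E$ incident to $u$ and $v$, the edge $uv$ is counted in both, so $e_u+e_v\leq b+1$. Consequently
\[
\deg_H(u)+\deg_H(v)=2(m-1)-(e_u+e_v)\geq 2m-b-3\geq m,
\]
and Ore's theorem produces a Hamiltonian cycle in $H$.

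The Hamiltonian-connected claim is handled identically, using the analogous Ore-type criterion that a graph on $m\geq 3$ vertices is Hamiltonian connected whenever $\deg(u)+\deg(v)\geq m+1$ for every non-adjacent pair $u,v$. The same bookkeeping yields $\deg_H(u)+\deg_H(v)\geq 2m-b-3\geq m+1$ because now $b\leq m-4$; the base case $m=4$, $b=0$ is $K_4$, which is obviously Hamiltonian connected. The only delicate step in the write-up is the counting inequality $e_u+e_v\leq b+1$ for non-adjacent $u,v$, whose tightness depends on noting that $uv$ itself lies in $F_E$; once that is in hand, both parts of the lemma are immediate consequences of the Ore-type sufficient conditions, so no further structural or inductive argument is required.
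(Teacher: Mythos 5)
Your argument is correct. Note first that the paper does not prove this lemma at all: it is imported verbatim from the cited reference, so there is no internal proof to match; what you have produced is a self-contained replacement. The reduction is sound: with $a=|F_V|$, $b=|F_E|$ and $m=n-a$, the hypothesis $a+b\leq n-3$ (resp.\ $n-4$) does force $b\leq m-3$ and $m\geq 3$ (resp.\ $b\leq m-4$ and $m\geq 4$), and the fact that some edges of $F_E$ may already disappear with $F_V$ only makes the surviving deletion count smaller, so using $b$ as an upper bound is safe. The key inequality $e_u+e_v\leq b+1$ for a non-adjacent pair is the right observation, since the double-counted edge is exactly $uv\in F_E$; this gives $\deg_H(u)+\deg_H(v)\geq 2m-b-3$, which is $\geq m$ under $b\leq m-3$ and $\geq m+1$ under $b\leq m-4$, so Ore's theorem and Ore's Hamiltonian-connectedness criterion (degree sum at least $m+1$ over non-adjacent pairs, $m\geq 3$) apply directly; the complete and near-trivial cases $m=3$, $m=4$ with $b=0$, and the case where $H$ has no non-adjacent pair are all handled. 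Compared with simply citing the literature, your route buys a short, elementary, and verifiable proof at the cost of invoking the (standard but less commonly quoted) Ore condition for Hamiltonian connectedness; if you wanted to avoid that second criterion you would need an ad hoc inductive construction of the Hamiltonian path, which would be longer. No gap.
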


\begin{lem}[\cite{5BHV}]\label{Pro:2}
For any even integer $n\geq2$, $mp(K_{n})=n-1$ and for any odd integer $n\geq11$, $mp(K_{n})=2n-3$ .
\end{lem}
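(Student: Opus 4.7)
The plan is to prove each identity by matching an explicit construction (upper bound) against a Tutte-style minimality argument (lower bound).

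For the upper bounds I exhibit MP-sets. When $n$ is even, take $F$ to consist of the $n-1$ edges incident to any single vertex $v$; this isolates $v$, so $K_n-F$ has no perfect matching, and it has no almost-perfect matching either (parity forces $|V|$ to be odd for an almost-perfect matching). When $n$ is odd, pick two vertices $u,v$ and let $F$ be all edges incident to either, for a total of $(n-1)+(n-2)=2n-3$ edges; both $u$ and $v$ are isolated, ruling out almost-perfect matchings (which permit only one unsaturated vertex), while perfect matchings are impossible by parity.

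For the even lower bound, assume $|F|=n-2$ and produce a perfect matching via Tutte's theorem. A hypothetical Tutte violator $S\subseteq V(K_n)$ would satisfy $o((K_n-F)-S)\ge|S|+2$ (the gap is even by parity). If the odd components have sizes $m_1,\dots,m_c$ with $c\ge|S|+2$, then every edge of $K_n$ joining distinct components lies in $F$, so $|F|\ge\sum_{i<j}m_im_j$. Minimising this quantity over positive integer compositions of $n-|S|$ into $c$ parts (extreme case: $c-1$ singletons and one part of size $n-|S|-c+1$) gives a lower bound strictly exceeding $n-2$ for every admissible $(|S|,c)$, a contradiction.

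The odd lower bound is the main obstacle. Take $F$ with $|F|=2n-4$ and find $v$ so that $K_n-F-v=K_{n-1}-F_v$ has a perfect matching, where $F_v:=F\cap E(K_n-v)$. Case-analyze on $\Delta(F):=\max_v d_F(v)$. If $\Delta(F)\ge n-1$ and $w$ realises the maximum, then $|F_w|\le(2n-4)-(n-1)=n-3<mp(K_{n-1})=n-2$, and the even lower bound applied to $K_{n-1}$ yields a perfect matching of $K_{n-1}-F_w$. If instead $\Delta(F)\le n-2$, no single vertex carries much of $F$, so for a suitably chosen low-$F$-degree $v$ a direct Tutte argument in $K_{n-1}-F_v$ succeeds: any Tutte obstruction in $K_{n-1}-F_v$ demands, via the same extremal counting as in the even case, strictly more edges than $F_v$ contains once $n\ge 11$ supplies the needed slack. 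This final counting gap is precisely why the odd formula requires $n\ge 11$; for smaller odd $n$ the value of $mp(K_n)$ can deviate from $2n-3$.
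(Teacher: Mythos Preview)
This lemma is quoted from \cite{5BHV} and the present paper gives no proof of it, so there is no in-paper argument to compare against. Your even-$n$ case is essentially sound: the cross-edge counting between components of $(K_n-F)-S$ does force $|F|\ge n-1$ for any Tutte obstruction, and the star construction matches.

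The odd-$n$ case, however, has a real gap. In the subcase $\Delta(F)\le n-2$ you delete a \emph{low}-$F$-degree vertex $v$ and then invoke the same extremal counting inside $K_{n-1}-F_v$. But a low $F$-degree at $v$ makes $|F_v|=|F|-d_F(v)$ \emph{large}, close to $2n-4$, so the counting must beat $2n-4$, not $n-2$. It does not: with $|S|=0$ and two odd components of sizes $1$ and $n-2$, the forced cross-edges number only $n-2<2n-4$, and this configuration is perfectly compatible with $\Delta(F)\le n-2$ (a vertex $u\neq v$ can have all of its $n-2$ edges inside $K_{n-1}$ in $F$ while $uv\notin F$). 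So no contradiction arises, regardless of how large $n$ is; the threshold $n\ge 11$ is not what rescues this step. The proof in \cite{5BHV} needs a finer structural layer here---in effect one must choose which vertex to delete more carefully and use that, for even $m$, every minimum MP set of $K_m$ is a star (a trivial MP set), so that when $|F_v|=n-2$ one can still extract a perfect matching unless $F_v$ has a very specific shape, which is then excluded by the hypothesis $\Delta(F)\le n-2$. As written your sketch does not supply this, and the claimed ``counting gap for $n\ge 11$'' is not the actual mechanism.
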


\begin{lem}[\cite{5JSF}]\label{lem:3.8}
Let $n\geq 6$ and $F\subseteq V(K_n)\cup E(K_n)$ such that $|F|\leq n-2$ and $\delta(K_n-F)\geq2$. Then $K_{n}-F$ is Hamiltonian.
\end{lem}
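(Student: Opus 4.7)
The plan is to reduce the problem to a pure edge-fault setting on a smaller complete graph and then combine Lemma \ref{lem:3.10} with a local rerouting argument. Write $F$ disjointly as $F_v \cup F_e$, with $F_v \subseteq V(K_n)$, $F_e \subseteq E(K_n)$, $|F_v| = a$, and $|F_e| = b$, so $a + b \leq n - 2$. Since $K_n - F_v \cong K_{n-a}$, and only the edges of $F_e$ with neither endpoint in $F_v$ survive the vertex deletion, setting $m := n - a$ and $F' := \{e \in F_e : e \cap F_v = \emptyset\}$ reduces the task to the following: for $F' \subseteq E(K_m)$ with $|F'| \leq m - 2$ and $\delta(K_m - F') \geq 2$, prove that $K_m - F'$ is Hamiltonian. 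Note that $m \geq 3$, since $m = 2$ would immediately force $\delta(K_m - F') \leq 1$.

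If $|F'| \leq m - 3$, then Lemma \ref{lem:3.10} (the $(m-3)$-fault-Hamiltonian part) yields a Hamilton cycle of $K_m - F'$ directly, whenever $m \geq 4$; the tiny remaining cases $m \in \{3,4,5\}$ are handled by hand. The genuinely new case is the extremal $|F'| = m - 2$. Pick any edge $e = uv \in F'$ and set $F'' = F' \setminus \{e\}$, so $|F''| = m - 3$. By Lemma \ref{lem:3.10}, the graph $K_m - F''$ admits a Hamilton cycle $C$. If $e \notin E(C)$, then $C \subseteq K_m - F'$ and we are done. Otherwise, writing $C$ as $u, v, x_3, x_4, \ldots, x_m, u$, I would attempt a 2-opt move: replace $\{uv, x_i x_{i+1}\}$ by $\{u x_i, v x_{i+1}\}$ for some $i \in \{3, \ldots, m-1\}$. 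This produces a Hamilton cycle of $K_m - F'$ exactly when $u x_i \notin F'$ and $v x_{i+1} \notin F'$.

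The main obstacle is guaranteeing that a good index $i$ exists. A direct count bounds the number of bad indices by $(d_{F'}(u) - 1) + (d_{F'}(v) - 1)$, and since the edge $e = uv$ is shared between the two incidence sets we have $d_{F'}(u) + d_{F'}(v) \leq |F'| + 1 = m - 1$, so the bad count is at most $m - 3$ — matching the number of candidates exactly. The extreme configurations where every candidate is blocked are precisely those in which every edge of $F'$ is incident to $u$ or $v$; in such a case the minimum-degree hypothesis $\delta(K_m - F') \geq 2$ pins down $d_{F'}(u), d_{F'}(v) \leq m - 3$ and forces a very rigid structure. This rigid structure can be exploited in one of two ways: either by replacing $C$ with a different Hamilton cycle of $K_m - F''$ produced by Lemma \ref{lem:3.10}, or by escalating to a 3-opt / Hamilton-path rerouting using the $(m-4)$-fault Hamiltonian-connected part of Lemma \ref{lem:3.10} applied to $K_m - (F' \setminus \{e, e'\})$ for a well-chosen second edge $e' \in F'$. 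The careful enumeration of these tight configurations and the verification that at least one escape route always survives is where essentially all the work lies.
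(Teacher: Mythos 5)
The paper imports Lemma~\ref{lem:3.8} from \cite{5JSF} without proof, so there is no internal argument to compare yours with; I am judging your proposal on its own terms. Your opening reduction---delete the vertex faults first, so the problem becomes a pure edge-fault problem on $K_m$ with $m=n-|F\cap V(K_n)|$, at most $m-2$ forbidden edges, and minimum degree still at least $2$---is correct. But it is exactly this reduction that exposes the fatal issue: the reduced statement is false for $m=5$. Take $F'$ to be a triangle in $K_5$; then $|F'|=3=m-2$ and $\delta(K_5-F')=2$, yet $K_5-F'$ is not Hamiltonian, since each of the three degree-$2$ vertices would need both of the two remaining vertices as its cycle-neighbours, forcing one of those two to have three neighbours on the cycle. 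Lifting this back through your own reduction, the fault set $F$ consisting of $n-5$ vertices of $K_n$ together with a triangle on three of the five surviving vertices satisfies $|F|=n-2$ and $\delta(K_n-F)=2$, while $K_n-F\cong K_5$ minus a triangle is not Hamiltonian. So the concluding claim of your sketch---``at least one escape route always survives''---is not merely unverified; it is false in this configuration, and no $2$-opt or $3$-opt rerouting can repair it because the Hamiltonian cycle you are trying to reach does not exist. (This is the very configuration the paper uses to show $mp^{k}(K_5)\le 3$, and it equally defeats the application of the lemma in Case~1 of Theorem~\ref{Thm:9}.)

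The upshot is that the lemma cannot be proved in the mixed vertex-and-edge form stated here; the source presumably asserts it for $F\subseteq E(K_n)$ only (which is all Theorem~\ref{Thm:7} requires), or lists the $K_5$-minus-a-triangle situation as an explicit exception. In the edge-only setting your plan is salvageable: the reduction is vacuous, $m=n\ge 6$, and in the tight case $d_{F'}(u)+d_{F'}(v)=m-1$ every faulty edge is incident with $u$ or $v$, so $K_m-u-v$ is a fault-free $K_{m-2}$; since each of $u,v$ keeps at least two non-faulty neighbours there, one can build the cycle directly instead of rerouting. As submitted, however, the proposal both leaves its hardest case as a promissory note and overlooks that this case contains an outright counterexample to the statement being proved.
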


\begin{lem}[\cite{8HCHT}]\label{Thm:1.8}
Let $n$ and $s$ be two integers with that $1\leq s\leq n-2$ and $F\subseteq V(A_{n,s})\cup E(A_{n,s})$. If $|F|\leq s(n-s)-2$, then $A_{n,s}-F$ is Hamiltonian.
\end{lem}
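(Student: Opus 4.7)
The plan is to prove this by induction on $s$, using the recursive decomposition $A_{n,s}=H_1\cup\cdots\cup H_n$ with each $H_i\cong A_{n-1,s-1}$ together with the cross edges between them. For the base case $s=1$ we have $A_{n,1}=K_n$ and the bound $s(n-s)-2=n-3$, so the claim reduces exactly to Lemma \ref{lem:3.10} (with trivial tiny cases checked directly). To close the induction cleanly, I would actually carry a stronger joint invariant along the lines of Lemma \ref{lem:3.10}, namely that $A_{n,s}$ is also $(s(n-s)-3)$-fault Hamiltonian connected, since Hamiltonian-connectedness of the lightly-faulted sub-copies is what allows cross edges to be used as bridges in the stitching step.

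For the inductive step, given a fault set $F$ with $|F|\leq s(n-s)-2$, I would split $F$ into pieces $F_i=F\cap(V(H_i)\cup E(H_i))$ inside each sub-copy and the remaining piece $F_c$ on cross edges. The easy regime is when every $H_i$ satisfies $|F_i|\leq(s-1)(n-s)-2$: by induction each $H_i-F_i$ is Hamiltonian, so one obtains cycles $C_1,\ldots,C_n$ and then merges them two at a time by the standard swap trick. One picks an edge $uv\in C_i$ and cross edges $uu',vv'\notin F_c$ with $u',v'$ consecutive on $C_j$; deleting $uv,u'v'$ and adding $uu',vv'$ fuses $C_i$ with $C_j$. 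The counts $(n-2)!/(n-s-1)!$ cross edges between any two $H_i,H_j$, together with $|F_c|\leq s(n-s)-2$, are generous enough to always locate such swap pairs.

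The delicate regime is when some single $H_{j}$ is heavily faulted, that is $|F_j|>(s-1)(n-s)-2$; since $s(n-s)-2=(s-1)(n-s)-2+(n-s)$, this can happen but forces every other $H_i$ to be very lightly faulted and hence Hamiltonian connected by the stronger invariant. In that case I would not attempt to route through $H_j$ as a whole cycle; instead I would peel off vertices of $H_j$ one at a time to outer neighbors in lightly faulted sub-copies, stitch them into Hamiltonian paths there via the Hamiltonian-connected invariant, and reduce to a smaller $H_j$-instance that now fits under the induction threshold. The hard part of the whole argument is exactly this delicate regime: one must simultaneously guarantee that enough usable outer neighbors of the chosen vertices survive in $F$, that the chosen endpoints in the neighboring $H_i$'s can be connected by a fault-free Hamiltonian path, and that after the peeling the residual fault budget in $H_j$ really drops under $(s-1)(n-s)-2$. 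Keeping the two invariants (fault Hamiltonicity and fault Hamiltonian-connectedness) in lockstep through the induction is where the bookkeeping becomes tight, and a few small-$n$, small-$s$ base cases must be verified by hand to start the recursion.
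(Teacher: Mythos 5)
This lemma is not proved in the paper at all: it is imported verbatim from the reference \cite{8HCHT} on fault Hamiltonicity of arrangement graphs, so there is no in-paper argument to compare against. Judged on its own terms, your proposal is a plausible outline of the strategy that such results are in fact proved by (induction on $s$ over the decomposition into copies of $A_{n-1,s-1}$, carrying fault Hamiltonicity and fault Hamiltonian-connectedness as a joint invariant), but it is a plan rather than a proof. You explicitly defer the two places where all the work lies: the ``delicate regime'' where one sub-copy $H_j$ absorbs more than $(s-1)(n-s)-2$ faults, and the verification of the small base cases. Since the whole point of the lemma is the exact threshold $s(n-s)-2$, an argument that stops at ``the bookkeeping becomes tight'' there has not established the statement.

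There is also a concrete error in your ``easy regime.'' You merge the cycles $C_i$ and $C_j$ by finding an edge $uv\in C_i$ and surviving cross edges $uu',vv'$ with $u',v'$ \emph{consecutive on} $C_j$. That double coincidence is not guaranteed: only a $\frac{n-s}{n-1}$ fraction of the vertices of $H_i$ have any neighbor in $H_j$ at all, the outer neighbor of a vertex in a given $H_j$ is unique when it exists, and $C_j$ is an arbitrary Hamiltonian cycle of $H_j-F_j$, so nothing forces the images $u',v'$ of two consecutive vertices of $C_i$ to be consecutive on $C_j$. The standard repair is exactly the stronger invariant you named but did not deploy here: take any edge $uv$ of $C_i$ whose endpoints both retain cross neighbors $u'\neq v'$ in $H_j$, and replace $C_j$ by a Hamiltonian path of $H_j-F_j$ from $u'$ to $v'$ supplied by fault Hamiltonian-connectedness. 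Even then one must count that such an edge $uv$ survives the faults, and one must prove the Hamiltonian-connectedness invariant itself inductively, with its own heavy-sub-copy case. As it stands the proposal identifies the right scaffolding but leaves the load-bearing steps unproven.
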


Based on Lemma \ref{Thm:1.8}, Cheng et al. investigated the matching preclusion number and strong matching preclusion number of arrangement graphs $A_{n,s}$ as follows.
\begin{lem}[\cite{8ECL}]\label{Thm:1.9}
Let $n$ and $s$ be integers with that $2\leq s\leq n-3$. Then $mp(A_{n,s})=s(n-s)$.
\end{lem}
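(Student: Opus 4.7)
The plan is to sandwich $mp(A_{n,s})$ between $s(n-s)$ on both sides, with the upper bound essentially immediate and the lower bound coming from Lemma~\ref{Thm:1.8}.

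For the upper bound, I would first observe that $|V(A_{n,s})| = n(n-1)\cdots(n-s+1)$ is even whenever $s\geq 2$, since the product contains the two consecutive factors $n$ and $n-1$. Because $A_{n,s}$ is $s(n-s)$-regular, the set $\Gamma(v)$ of edges incident to an arbitrary vertex $v$ has size exactly $s(n-s)$; removing $\Gamma(v)$ isolates $v$, which in a graph on an even number of vertices precludes both a perfect matching and an almost-perfect matching. Hence $mp(A_{n,s})\leq s(n-s)$.

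For the lower bound, I would take any $F\subseteq E(A_{n,s})$ with $|F|\leq s(n-s)-1$ and produce a perfect matching in $A_{n,s}-F$. If $|F|\leq s(n-s)-2$, then Lemma~\ref{Thm:1.8} gives a Hamilton cycle in $A_{n,s}-F$, and since $|V(A_{n,s})|$ is even, alternating edges along this cycle yield a perfect matching. The tight case $|F|=s(n-s)-1$ requires one extra move: pick any $e\in F$, set $F' = F\setminus\{e\}$ so that $|F'| = s(n-s)-2$, and apply Lemma~\ref{Thm:1.8} to produce a Hamilton cycle $C$ of $A_{n,s}-F'$. If $e\notin E(C)$, then $C$ already lies in $A_{n,s}-F$ and the alternating-edge argument finishes the job; otherwise $C-e$ is a Hamilton path in $A_{n,s}-F$, and taking every second edge starting from either endpoint gives a perfect matching, again using that $|V(A_{n,s})|$ is even.

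The main obstacle is the boundary case $|F|=s(n-s)-1$, which sits just outside the reach of Lemma~\ref{Thm:1.8}; the trick of peeling one edge off of $F$ and distinguishing whether it belongs to the resulting Hamilton cycle is what bridges the gap. Everything else — evenness of $|V(A_{n,s})|$, regularity, and conversion of Hamiltonicity into a perfect matching — is routine.
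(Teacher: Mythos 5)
Your proof is correct. The paper does not prove this lemma itself (it is quoted from Cheng et al.\ \cite{8ECL}), but it explicitly notes that the result is ``based on Lemma~\ref{Thm:1.8},'' and your argument --- evenness of $|V(A_{n,s})|$, the trivial upper bound $\Gamma(v)$, fault-Hamiltonicity for $|F|\leq s(n-s)-2$, and the peel-one-edge trick to handle $|F|=s(n-s)-1$ via a Hamilton path --- is exactly that route, carried out cleanly and with the boundary case handled correctly.
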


\begin{lem}[\cite{8ECO}]\label{Thm:1.10}
Let $n$ and $s$ be integers with that $2\leq s\leq n-2$. Then $smp(A_{n,s})=s(n-s)$.
\end{lem}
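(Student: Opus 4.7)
The upper bound $smp(A_{n,s})\le s(n-s)$ is immediate. Fix any vertex $v$ and let $F=\Gamma(v)$, so $|F|=s(n-s)$ and $v$ is isolated in $A_{n,s}-F$. Since $|V(A_{n,s})|=n(n-1)\cdots(n-s+1)$ is a product of $s\ge 2$ consecutive integers it is even, so the isolation of $v$ rules out a perfect matching of $A_{n,s}-F$, and also any almost-perfect matching, which would leave an odd number of vertices besides $v$ to be perfectly matched in $A_{n,s}-v$. Hence $F$ is an $SMP$ set of size $s(n-s)$.

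For the lower bound, the plan is to show that every $F\subseteq V(A_{n,s})\cup E(A_{n,s})$ with $|F|\le s(n-s)-1$ fails to be an $SMP$ set. When $|F|\le s(n-s)-2$, Lemma~\ref{Thm:1.8} directly yields that $A_{n,s}-F$ is Hamiltonian. A Hamiltonian cycle of even length yields a perfect matching, and one of odd length (which happens exactly when $F$ contains an odd number of vertices) yields an almost-perfect matching by taking alternate edges. This settles everything except the boundary case $|F|=s(n-s)-1$, where Lemma~\ref{Thm:1.8} is one short of applying.

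For that boundary case I would first argue that $F$ cannot isolate any vertex of $A_{n,s}-F$: to isolate some $v\notin F$ one would have to spend, for each of the $s(n-s)$ neighbours $u$ of $v$, either the edge $vu$ or the vertex $u$, producing an injection $N(v)\hookrightarrow F$ and hence $|F|\ge s(n-s)$. Thus $\delta(A_{n,s}-F)\ge 1$. If in fact $\delta(A_{n,s}-F)\ge 2$, I would push Lemma~\ref{Thm:1.8} one further (in the spirit of Lemma~\ref{lem:3.8} for $K_n$) to recover Hamiltonicity of $A_{n,s}-F$. If some vertex $u$ has degree exactly $1$ with unique remaining neighbour $w$, I would fix the edge $uw$ in the matching under construction, remove $\{u,w\}$ from the graph, and invoke Lemma~\ref{Thm:1.8} on the resulting smaller graph (possibly after decomposing along the canonical copies $H_1,\dots,H_n$) to find a Hamiltonian cycle of the residual, which converts to a (almost-)perfect matching and extends by $uw$.

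The main obstacle is exactly this boundary case $|F|=s(n-s)-1$ with $\delta(A_{n,s}-F)=1$: one must carefully count faults after pairing off $u$ with $w$, verify that no new degree-one obstruction appears at a neighbour of $w$, and separately handle the pathological configurations in which most of the $s(n-s)-1$ faults accumulate in a single $H_i$ of the canonical decomposition. Such configurations one would hope to dispose of by exploiting the inductive structure $H_i\cong A_{n-1,s-1}$ together with the abundance of cross edges between the $H_j$'s, matching inside each $H_j$ using smaller instances of the lemma and then stitching the partial matchings together via cross edges.
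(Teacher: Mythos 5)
This lemma is quoted from \cite{8ECO}; the paper contains no proof of it, so there is nothing internal to compare against, and your attempt must stand on its own. Your upper bound is correct (isolating a vertex with $\Gamma(v)$ works, and an even graph has no almost-perfect matching by parity), and the case $|F|\le s(n-s)-2$ follows cleanly from Lemma~\ref{Thm:1.8}. But all of the actual content of the theorem lives in the boundary case $|F|=s(n-s)-1$, and there your argument is a plan rather than a proof, and the plan as written does not close. First, in the subcase $\delta(A_{n,s}-F)\ge 2$ you appeal to a hoped-for strengthening of Lemma~\ref{Thm:1.8} ``in the spirit of Lemma~\ref{lem:3.8}''; no such fault-Hamiltonicity result with a minimum-degree hypothesis is available for arrangement graphs in this paper, and establishing it (or a workable substitute) is essentially the main technical achievement of \cite{8ECO} --- it cannot be waved in.

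Second, the degree-one reduction fails quantitatively. If $d_{A_{n,s}-F}(u)=1$ with unique surviving neighbour $w$, then each of the other $s(n-s)-1$ neighbours of $u$ consumes a distinct fault, so $F$ consists entirely of edges at $u$ and vertices of $N(u)$; writing $F_V$ for the vertex part, the residual graph after fixing $uw$ is $A_{n,s}-F_V-\{u,w\}$, and $|F_V\cup\{u,w\}|$ can be as large as $s(n-s)+1$, which exceeds the threshold $s(n-s)-2$ of Lemma~\ref{Thm:1.8} by three. So the lemma cannot simply be ``invoked on the resulting smaller graph''; one genuinely needs the decomposition into the copies $H_i\cong A_{n-1,s-1}$, induction on $s$, and a case analysis on how the faults distribute among the $H_i$ and the cross edges --- precisely the machinery your final sentence defers. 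As it stands, your argument proves only $s(n-s)-1\le smp(A_{n,s})\le s(n-s)$, leaving the decisive last step open.
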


Since $smp(A_{n,n-2})\leq mp(A_{n,n-2})\leq \delta(A_{n,n-2})$, $mp(A_{n,n-2})=2(n-2)$ by Lemma \ref{Thm:1.10}.

In the following, we study the (strong) integer $k$-matching preclusion number of $A_{n,s}$ by distinguish three cases. Note that $k\geq 3$ is odd.

\subsection{$MP^{k}$ and $SMP^{k}$ numbers of  $K_n$}

Since $A_{n,1}$ is a complete graph $K_n$, we study the (strong) integer $k$-matching preclusion number of complete graphs. Firstly, we need to prove the following lemma.
\begin{lem}\label{lem:4.1}
Let $G$ be a graph with an almost perfect integer $k$-matching. Then $odd(G-S)+k\cdot i(G-S)\leq k|S|+1$ for any $S\subseteq V(G)$.
\end{lem}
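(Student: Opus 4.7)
The plan is to fix an almost perfect integer $k$-matching $h$ of $G$ with unique unsaturated vertex $v'$ (so $\sum_{e\in\Gamma(v')}h(e)=k-1$ and $\sum_{e\in\Gamma(v)}h(e)=k$ for $v\neq v'$), and to double count the $h$-weight on the edges crossing between $S$ and $V(G)\setminus S=V(G-S)$; write $W=\sum_{e\in E_G(S,V(G)\setminus S)}h(e)$. Summing $\sum_{e\in\Gamma(v)}h(e)\leq k$ over $v\in S$ and noting that each edge of $G[S]$ is counted twice immediately gives $W\leq k|S|$, with the sharper bound $W\leq k|S|-1$ when $v'\in S$.

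The lower bound for $W$ comes from the components of $G-S$. For every isolated vertex $u$ of $G-S$, all edges of $\Gamma(u)$ cross into $S$, so its contribution to $W$ equals $\sum_{e\in\Gamma(u)}h(e)$, i.e.\ $k$ if $u\neq v'$ and $k-1$ if $u=v'$. For every odd component $C$ of $G-S$ with $|V(C)|\geq 3$, the identity
\[
\sum_{v\in V(C)}\sum_{e\in\Gamma(v)}h(e)=2\sum_{e\in E(C)}h(e)+\sum_{e\in E_G(V(C),S)}h(e)
\]
combined with the fact that the left side equals $k|V(C)|$ when $v'\notin V(C)$ and $k|V(C)|-1$ when $v'\in V(C)$ forces $\sum_{e\in E_G(V(C),S)}h(e)$ to have the same parity as $k|V(C)|-[v'\in V(C)]$. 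Because $k$ and $|V(C)|$ are both odd, this parity is odd in the first subcase (so the crossing sum is $\geq 1$) and even in the second (so we only get the trivial bound $\geq 0$). Even components of $G-S$ are ignored.

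It remains to combine the bounds. The location of $v'$ falls into exactly one of four cases: $v'\in S$, $v'$ is isolated in $G-S$, $v'$ belongs to an odd component of $G-S$ of size $\geq 3$, or $v'$ belongs to an even component of $G-S$. In each case the corresponding ``deficit of $1$'' either reduces the upper bound for $W$ by $1$ (first case) or the lower bound for $W$ by $1$ (middle two cases), and the fourth case loses nothing; in every case comparing upper and lower bounds for $W$ yields the stated inequality $odd(G-S)+k\cdot i(G-S)\leq k|S|+1$.

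The main obstacle is the case $v'\in V(C)$ for an odd component $C$ of $G-S$ with $|V(C)|\geq 3$, where the parity argument only delivers the trivial lower bound $\sum_{e\in E_G(V(C),S)}h(e)\geq 0$ instead of $\geq 1$; the slack this creates is exactly the additive $+1$ on the right-hand side, so one must be careful to show that across the four cases this loss is incurred at most once.
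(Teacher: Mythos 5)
Your proposal is correct and follows essentially the same route as the paper: both arguments bound the total $h$-weight $W$ on edges crossing from $V(G)\setminus S$ into $S$ above by $k|S|$ (or $k|S|-1$ when the deficient vertex lies in $S$) and below by $odd(G-S)+k\cdot i(G-S)$ minus at most $1$, using the same parity observation that an odd nontrivial component avoiding the deficient vertex must send odd, hence positive, weight into $S$. The paper organizes this as four cases on the location of the deficient vertex, which is exactly the case split you describe at the end, so the two proofs coincide in substance.
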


\begin{proof} Let $h$ be an almost perfect integer $k$-matching of $G$ and $u\in V(G)$ such that $\sum\limits_{e\in \Gamma(u)}h(e)=k-1$ and $\sum\limits_{e\in \Gamma(v)}h(e)=k$ for any $v\in V(G)-\{u\}$. Let $S\subseteq V(G)$, $Q_{1},\cdots , Q_{t}$ be the odd non-trivial components, $\{v_{1}, \cdots , v_{l}\}$ be the isolated vertex set and $H$ be the union of even components of $G-S$, respectively. We distinguish the following cases.

\noindent\textbf{Case 1.}\ There exists a $Q_j$ such that $u\in V(Q_{j})$.
Then we have that $\sum\limits_{e\in E(\overline{S},S)}h(e)\leq k|S|$, $\sum\limits_{e\in E(v_{i},S)}h(e)=k$ for any $i\in \{1, \ldots , l\}$ and $\sum\limits_{e\in E(V(Q_{i}),S)}h(e)\geq 1$ for any $i\neq j$ since both $Q_{i}$ and $k$ are odd. Hence
$$odd(G-S)+k\cdot i(G-S)\leq \sum\limits_{i\in \{1,\cdots,t\}-\{j\}} \sum\limits_{e\in E(V(Q_{i}),S)}h(e)+1+\sum\limits_{e\in E(\{v_{1},\cdots,v_{l}\},S)}h(e)$$
$$\ \ \ \ \leq\sum\limits_{e\in E(\overline{S},S)}h(e)+1\leq k|S|+1.$$

\noindent\textbf{Case 2.}\ There exists $j\in \{1, \ldots , l\}$ such that $u=v_{j}$. Similar to the statement above, we have that
$$ odd(G-S)+k\cdot i(G-S)\leq \sum \limits_{i=1}^{t}\sum\limits_{e\in E(V(Q_{i}),S)}h(e)+\sum\limits_{i\in \{1,\cdots,l\}-\{j\}} \sum\limits_{e\in E(v_{i},S)}h(e)+\sum\limits_{e\in E(v_{j},S)}h(e)+1$$
  $$\leq \sum\limits_{e\in E(\overline{S},S)}h(e)+1\leq k|S|+1. \ \ \ \ \ \ \ \ \ \ \ \ \ $$

\noindent\textbf{Case 3.}\ $u\in V(H)$. By the similar reasons as above, we have that
$$odd(G-S)+k\cdot i(G-S)\leq \sum \limits_{i=1}^{t}\sum\limits_{e\in E(V(Q_{i}),S)}h(e)+\sum\limits_{e\in E(\{v_{1},\cdots,v_{l}\},S)}h(e)\leq\sum\limits_{e\in E(\overline{S},S)}h(e)\leq k|S|.$$

\noindent\textbf{Case 4.}\ $u\in S$.
Then we have that $\sum\limits_{e\in E(\overline{S},S)}h(e)\leq k|S|-1$. Thus
$$odd(G-S)+k\cdot i(G-S)\leq \sum \limits_{i=1}^{t}\sum\limits_{e\in E(V(Q_{i}),S)}h(e)+\sum\limits_{e\in E(\{v_{1},\cdots,v_{l}\},S)}h(e)\leq\sum\limits_{e\in E(\overline{S},S)}h(e)\leq k|S|-1.$$
\end{proof}

\begin{thm}\label{Thm:7}
For any complete graph $K_n$,
$mp^{k}(K_{n})=
 \begin{cases}
 n-2 ,& \text{$n=3,5$}.\\
n-1 ,& \text{otherwise}.
\end{cases}
$

\end{thm}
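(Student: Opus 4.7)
The plan is to split on the parity of $n$. When $n$ is even the problem is immediate: the observation from Section~1.2 that $mp(G)\le mp^k(G)$ whenever $G$ is even combines with Lemma~\ref{Pro:2} to give $mp^k(K_n)\ge n-1$, while the trivial bound $mp^k(K_n)\le\delta(K_n)=n-1$ closes the gap. All the work lies in the odd case, which I would further subdivide into $n\in\{3,5\}$ and $n\ge 7$.

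The engine powering every lower bound in the odd case is a cycle-based construction of an almost perfect integer $k$-matching on any odd Hamiltonian graph. Picking any Hamilton cycle $v_1v_2\cdots v_nv_1$ and designating one vertex $v_j$ as unsaturated, I would assign $h(v_{j-1}v_j)=h(v_jv_{j+1})=(k-1)/2$ and then alternate $(k+1)/2$ and $(k-1)/2$ along the rest of the cycle. Since $k$ is odd the values are integers in $[0,k]$; since $n$ is odd the alternation closes up consistently; and every vertex except $v_j$ collects $(k+1)/2+(k-1)/2=k$ while $v_j$ collects $k-1$. Hence whenever $K_n-F$ is Hamiltonian, $K_n-F$ admits an almost perfect integer $k$-matching.

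For the small cases I would verify directly that $K_3$ has an almost perfect integer $k$-matching (the engine provides one), while $K_3$ minus any edge is $P_3$, to which Lemma~\ref{lem:4.1} applied with $S$ equal to the middle vertex is lethal since the deletion produces two isolated vertices and $2k>k+1$. For $n=5$ the candidate $MP^k$ set is the edge set of a triangle $\{v_1v_2,v_2v_3,v_1v_3\}$: taking $S=\{v_4,v_5\}$ in Lemma~\ref{lem:4.1} turns the residue into three isolated vertices and the inequality $3k\le 2k+1$ fails. For the matching lower bound on $K_5$, Lemma~\ref{lem:3.10} makes $K_5$ a $2$-fault Hamiltonian graph, so any two-edge deletion is Hamiltonian and the engine finishes. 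This gives $mp^k(K_3)=1$ and $mp^k(K_5)=3$.

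The bulk of the work, and the only real obstacle, is $n\ge 7$ odd. The upper bound $mp^k(K_n)\le n-1$ is immediate since isolating a vertex with $n-1$ edge-deletions leaves it with weight $0\notin\{k-1,k\}$. For the lower bound I would show that every $F\subseteq E(K_n)$ with $|F|\le n-2$ leaves an almost perfect integer $k$-matching. When $\delta(K_n-F)\ge 2$, Lemma~\ref{lem:3.8} supplies a Hamilton cycle and the engine finishes the job. The delicate subcase is $\delta(K_n-F)=1$: here $|F|$ must equal $n-2$, every edge of $F$ must be incident to the deficient vertex $v$, and $v$ has a unique remaining neighbor $u$. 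My plan is to pick any $w\in V(K_n)\setminus\{u,v\}$, apply the engine to the odd Hamiltonian graph $K_{n-2}$ on $V(K_n)\setminus\{u,v\}$ with $w$ designated unsaturated, and patch in $h(uv)=k-1$ and $h(uw)=1$. A one-line check shows $v$ has weight $k-1$ (unsaturated), while $u$, $w$, and every remaining vertex have weight exactly $k$, producing the required almost perfect integer $k$-matching.
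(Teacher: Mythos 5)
Your proof is correct and follows essentially the same route as the paper: the trivial upper bound $\delta(K_n)=n-1$, Lemma~\ref{lem:4.1} with a triangle plus two vertices for the $K_5$ upper bound, and the Hamiltonicity lemmas (Lemmas~\ref{lem:3.10} and~\ref{lem:3.8}) together with the odd-cycle almost perfect integer $k$-matching to handle the lower bounds, splitting on $\delta(K_n-F)$ for large $n$. The only cosmetic differences are that you dispatch even $n$ via $mp(G)\le mp^k(G)$ and Lemma~\ref{Pro:2} instead of the Hamiltonicity argument, and you make explicit the cycle construction and the $h(uv)=k-1$, $h(uw)=1$ patch that the paper leaves implicit.
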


\begin{proof} Since $K_{3}$ has an almost perfect integer $k$-matching, $mp^{k}(K_{3})\geq 1$. But $P_3=K_3-e$ has no almost perfect integer $k$-matching. Thus $mp^{k}(K_{3})\leq1$. Hence $mp^{k}(K_{3})=1$.

Since $mp^{k}(K_{4})\leq \delta (K_{4})=3$ and $mp^{k}(K_{4})\geq mp(K_{4})=3$ by Lemma~\ref{Pro:2}, we have that $mp^{k}(K_{4})=3$.

Clearly, $K_{5}-F$ has a Hamiltonian cycle for any $F\subseteq E(K_{5})$ with that $|F|=2$. So $K_{5}-F$ has an almost perfect integer $k$-matching. Thus $mp^{k}(K_{5})\geq 3$. Let $F=\{uv, uw, vw\}\subset E(K_{5})$ and $S=\{x,y\}\subset V(K_{5})$ (see Fig. 4). Then $odd(K_{5}-F-S)+k\cdot i(K_{5}-F-S)=3k>2k+1$. So $K_{5}-F$ has no almost perfect integer $k$-matching by Lemma~\ref{lem:4.1}. Thus $mp^{k}(K_{5})\leq 3$. Hence $mp^{k}(K_{5})=3$.
\begin{figure}\label{Fig:6}
\begin{center}
\includegraphics[scale=0.35]{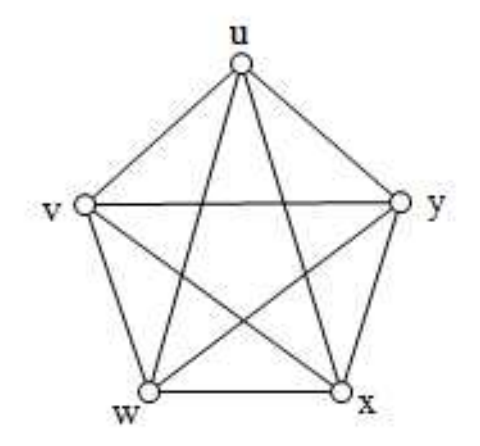}\\
Fig. 4. $K_{5}$
\end{center}
\end{figure}
Let $n\geq 6$. It clear that $mp^{k}(K_{n})\leq \delta (K_{n})=n-1$. Let $F\subseteq E(K_{n})$ such that $|F|=n-2$. We prove that $F$ is not $MP_k$ set by the following cases.

\noindent\textbf{Case 1.}\ $\delta (K_{n}-F)\geq2$.
Then $K_{n}-F$ is Hamiltonian by Lemma~\ref{lem:3.8}. Thus $K_{n}-F$ has a perfect integer $k$ matching or an almost perfect integer $k$-matching.

\noindent\textbf{Case 2.}\ $\delta (K_{n}-F)=1$.
Suppose that $d_{K_{n}-F}(v)=1$ and $uv\in E(K_{n}-F)$. It follows that every edge in $F$ is incident with $v$. Since $K_{n}-F-\{u,v\}=K_{n-2}$, we can construct a perfect integer $k$-matching or an almost perfect integer $k$-matching of $K_{n}-F$ which assigns $k$ to $uv$.

Hence $mp_k(K_n)\geq n-1$. Thus $mp_k(K_n)= n-1$ for $n\geq6$.
\end{proof}

Next, we investigate the strong integer $k$-matching preclusion number of complete graphs.
\begin{thm}\label{Thm:9}
For any complete graph $K_n$,
$smp^{k}(K_{n})=
 \begin{cases}
 n-2 ,& \text{$n=3,4,5$}.\\
n-1 ,& \text{otherwise}.

\end{cases}
$

\end{thm}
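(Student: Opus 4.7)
The plan is to mirror the structure of the proof of Theorem~\ref{Thm:7}, replacing pure edge-deletion arguments by edge-and-vertex versions anchored on the Hamiltonian lemmas~\ref{lem:3.10} and~\ref{lem:3.8}. For the upper bound $smp^{k}(K_n)\le n-1$ when $n\ge 6$, I would isolate a single vertex by removing its $n-1$ incident edges; since $k\ge 3$, an isolated vertex rules out both perfect and almost-perfect integer $k$-matchings. For the sharper upper bound $smp^{k}(K_n)\le n-2$ when $n=3,4,5$, I would exhibit an explicit construction: delete $n-3$ vertices and one edge from the remaining triangle, leaving $P_3$. The leaves of $P_3$ force each pendant edge to carry weight $k$ for saturation, making the middle vertex accumulate $2k$, which is incompatible with both integer $k$-matching conditions for $k\ge 3$.

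For the lower bounds in the small cases, $n=3$ follows from exhibiting an explicit almost-perfect integer $k$-matching on $K_3$ with weights $(k-1)/2,(k-1)/2,(k+1)/2$. For $n=4$, I would rule out one-element SMP sets by handling both shapes: $K_4-v=K_3$ carries the same almost-perfect integer $k$-matching, and $K_4-e$ retains a perfect integer $k$-matching obtained by assigning weight $k$ to two disjoint edges of a Hamiltonian $4$-cycle spanning the remaining vertex set. For $n=5$, Lemma~\ref{lem:3.10} says $K_5$ is $2$-fault Hamiltonian, so any $F$ with $|F|\le 2$ leaves a Hamiltonian subgraph on $3$, $4$, or $5$ vertices, and its Hamiltonian cycle supplies an almost-perfect (odd length) or perfect (even length) integer $k$-matching.

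For $n\ge 6$ the main task is the lower bound: every $F$ with $|F|\le n-2$ leaves $K_n-F$ with a perfect or an almost-perfect integer $k$-matching. Writing $F=F_V\cup F_E$ and $m=n-|F_V|$, the subcase $|F|\le n-3$ is immediate from Lemma~\ref{lem:3.10}. For $|F|=n-2$ I would split on $\delta(K_n-F)$: if $\delta\ge 2$ and $m\ge 6$, then Lemma~\ref{lem:3.8} applied to $K_m$ with $|F_E|\le m-2$ edges removed gives Hamiltonicity, and the Hamiltonian cycle yields the required integer $k$-matching; if $\delta\le 1$, a degree count forces all deletions to concentrate around a single low-degree vertex $v$, so I would assign weight $k$ to a surviving edge $vu$ and extend an integer $k$-matching on the complete graph on the remaining $m-2$ vertices via the same Hamiltonian machinery. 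The main obstacle I anticipate is the boundary regime with $|F_V|$ close to $n-3$ and $m$ small, where Lemma~\ref{lem:3.8} no longer applies and the tiny residue has to be checked by hand to rule out small-scale obstructions (of the $P_3$ flavour exploited in the upper-bound construction) that could otherwise sneak through within the edge-and-vertex budget.
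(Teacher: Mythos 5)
Your arguments for $n=3,4,5$ and for the upper bound $smp^{k}(K_{n})\leq n-1$ are correct and essentially the paper's (the paper gets the $n=5$ lower bound by quoting $smp^{k}(K_{5})\leq mp^{k}(K_{5})=3$ from Theorem~\ref{Thm:7}, but your direct appeal to Lemma~\ref{lem:3.10} is equivalent). The real problem is the lower bound for $n\geq 6$, and the ``boundary regime'' you defer at the end is not a residue to be checked by hand --- it is a counterexample to the statement you are trying to prove. Your own upper-bound construction (delete $n-3$ vertices, then one edge of the surviving triangle) costs exactly $n-2$ faults and leaves $P_{3}$ for \emph{every} $n\geq 3$, not just for $n\in\{3,4,5\}$; and, as you correctly compute, $P_{3}$ admits neither a perfect nor an almost perfect integer $k$-matching once $k\geq 2$, since its centre would have to carry weight at least $2k-1>k$. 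Hence this $F$ is an $SMP^{k}$ set of size $n-2$, so $smp^{k}(K_{n})\leq n-2$ for all $n\geq 3$, contradicting the value $n-1$ claimed for $n\geq 6$. In your case split this is exactly the configuration $|F_{V}|=n-3$, $\delta(K_{n}-F)=1$, where the ``complete graph on the remaining $m-2$ vertices'' is a single vertex whose only surviving neighbour is already saturated by the edge $uv$.

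For the record, the paper's proof has the same hole: its Case~2 asserts that because $K_{n}-F-\{u,v\}$ is complete one ``can construct'' the required matching after assigning $k$ to $uv$, and this fails precisely when that complete graph is $K_{1}$. (Its Case~1 is also fragile: Lemma~\ref{lem:3.8} as quoted is false for mixed faults --- removing from $K_{6}$ one vertex together with the three edges of a triangle on three of the remaining vertices leaves a non-Hamiltonian graph with minimum degree $2$ --- although your edges-only application of that lemma to $K_{m}$ sidesteps this particular issue.) So the discrepancy is not between your route and the paper's, which coincide in structure, but between both proofs and the theorem itself: the evidence points to $smp^{k}(K_{n})=n-2$ for all $n\geq 3$, with the matching lower bound $smp^{k}(K_{n})\geq n-2$ following from Lemma~\ref{lem:3.10} exactly as in your $|F|\leq n-3$ step.
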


\begin{proof}
Since $K_{3}$ has an almost perfect integer $k$-matching, $smp^{k}(K_{3})\geq 1$. And $smp^{k}(K_3)\leq mp^{k}(K_3)=1$ by Theorem~\ref{Thm:7}. Thus $smp^{k}(K_{3})=1$.

Let $v\in V(K_{4})$ and $e\in E(K_{4}-v)$. Then $K_{4}-\{v,e\}$ has neither perfect integer $k$-matching nor almost perfect integer $k$-matching. Thus $smp^{k}(K_{4})\leq 2$. Since $K_{4}-\{u\}$ has an almost perfect integer $k$-matching for any $u\in V(K_{4})$ and $K_{4}-\{e\}$ has a perfect integer $k$-matching for any $e\in E(K_{4})$, $smp^{k}(K_{4})\geq 2$. Thus $smp^{k}(K_{4})=2$.

Clearly, $K_{5}-F$ is Hamiltonian for any $F\subset E(K_{5})\cup V(K_{5})$ with that $|F|=2$. So $K_{5}-F$ has a perfect integer $k$-matching or an almost perfect integer $k$-matching. Thus $smp^{k}(K_{5})\geq 3$. And $smp^{k}(K_5)\leq mp^{k}(K_5)=3$ by Theorem~\ref{Thm:7}. Hence $smp^{k}(K_{5})=3$.

Let $n\geq 6$ and $F\subseteq E(K_{n})\cup V(K_{n})$ such that $|F|=n-2$. Clearly $smp_k(K_n)\leq n-1$. We distinguish the following cases to prove that $F$ is not $SMP_k$ set.

\noindent\textbf{Case 1.}\ $\delta (K_{n}-F)\geq2$.
Then $K_{n}-F$ is Hamiltonian by Lemma~\ref{lem:3.8}. Thus $K_{n}-F$ has a perfect integer $k$-matching or an almost perfect integer $k$-matching.

\noindent\textbf{Case 2.}\ $\delta (K_{n}-F)=1$.
Then we can assume that $d_{K_{n}-F}(v)=1$ and $uv\in E(K_{n}-F)$. It follows that every edge in $F$ is incident with $v$ and every vertex in $F$ is adjacent to $v$. Since $K_{n}-F-\{u,v\}$ is a complete graph, we can construct a perfect integer $k$-matching or an almost perfect integer $k$-matching of $K_{n}-F$ which assigns $k$ to $uv$.

Hence $smp_k(K_n)\geq n-1$. Thus $smp_k(K_n)= n-1$ for $n\geq6$.
\end{proof}

\subsection{$MP^{k}$ and $SMP^{k}$ numbers of bipartite graphs}

Note that $A_{n,n-1}$ is a bipartite graph. A connected graph $G$ is said to be an $odd$-$cycle$-$tree$ $graph$ if every block of $G$ is either an odd cycle or $K_{2}$, $\delta(G)\geq 2$ and no two odd cycles have a common vertex (see Fig. 5).
\begin{lem}[\cite{5YXL}]\label{lem:1.1}
Let $k$ be an integer, $h$ a maximum integer $k$-matching of a graph $G$ with the maximum number of 0-edges and $H$ the subgraph of $G$ induced by the set of edges $e$ with that $h(e)\neq 0$. Then every component of $H$ is a single edge or an odd-cycle-tree graph.
\end{lem}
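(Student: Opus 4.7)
The plan is to exploit two weight-preserving modifications of a maximum integer $k$-matching $h$ having the most $0$-edges: (i) an alternating $\pm\epsilon$ swap along a closed walk of even length in $H$, and (ii) a local swap at a pendant vertex of $H$. Each of these either produces a $k$-matching with strictly larger total weight (contradicting maximality of $h$) or a $k$-matching with the same weight and strictly more $0$-edges (contradicting the choice of $h$). Fixing a component $C$ of $H$, I would verify in order that (a) $C$ has no even cycle, (b) every block of $C$ is an odd cycle or a $K_2$, (c) no two odd cycles of $C$ share a vertex, and (d) $\delta(C)\ge 2$ whenever $|V(C)|\ge 3$; together these give the stated alternative.

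For (a), given an even cycle $e_1,\dots,e_{2m}$ in $C$, set $\epsilon=\min\{h(e_1),h(e_3),\dots,h(e_{2m-1})\}\ge 1$ and update $h(e_i)\leftarrow h(e_i)+(-1)^i\epsilon$: at each cycle-vertex two incident edges change by $+\epsilon$ and $-\epsilon$, so all vertex constraints survive, the total weight is unchanged, and some edge becomes $0$---contradiction. For (b), I invoke the fact that a simple $2$-connected graph without even cycles must itself be an odd cycle: in an ear decomposition, attaching a first ear of length $\ell$ to the starting (odd) cycle splits that cycle into paths of lengths $a,b$ with $a+b$ odd, creating two new cycles of lengths $a+\ell$ and $b+\ell$ whose sum is odd, forcing one of them to be even. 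Hence every block of $C$ is an odd cycle or $K_2$. For (c), two odd-cycle blocks $C_1,C_2$ meeting at a cut vertex $v$ give a concatenated closed walk of even length that uses each edge exactly once; because both $|C_1|$ and $|C_2|$ are odd, the four walk-edges incident to $v$ receive signs $-,-,+,+$, so the swap argument from (a) still preserves the vertex sum at $v$ and again creates a new $0$-edge.

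For (d), assume $v$ has $d_H(v)=1$ with unique $H$-edge $uv$ and $|V(C)|\ge 3$. Since $C$ is connected, $u$ has some other $H$-neighbor $w$, so $h(uw)\ge 1$. If $h(uv)=k$, then the constraint at $u$ is already saturated by $uv$ alone, forcing $h(uw)=0$---contradiction. If $h(uv)<k$ and $\sum_{e\ni u}h(e)<k$, then one may increase $h(uv)$ by $1$, contradicting the maximality of $h$. Otherwise $h(uv)<k$ and $\sum_{e\ni u}h(e)=k$, whence $h(uv)+h(uw)\le k$, and the swap $h(uv)\leftarrow h(uv)+h(uw)$, $h(uw)\leftarrow 0$ is feasible at $v$, $u$ and $w$, preserves the total weight, and creates a new $0$-edge---contradiction. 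The main obstacle I expect is precisely this bookkeeping in (d): one must combine the maximality of $h$ and the tight constraint at $u$ to guarantee that the swap does not overshoot $k$ at $v$. A smaller subtlety in (c) is verifying the sign pattern at the shared vertex, which however reduces to the easy parity computation indicated above.
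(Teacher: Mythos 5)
Your argument is correct and complete. Note that the paper itself gives no proof of this lemma -- it is quoted from the reference \cite{5YXL} -- so there is nothing internal to compare against; but your four-step verification (no even cycle via the alternating $\pm\epsilon$ swap, the ear-decomposition fact that a $2$-connected graph with no even cycle is an odd cycle, the even closed walk through two odd cycles meeting at a cut vertex with sign pattern $-,-,+,+$, and the pendant-vertex case split on $h(uv)=k$, $\sum_{e\ni u}h(e)<k$, and $\sum_{e\ni u}h(e)=k$) is exactly the standard weight-preserving exchange argument for this statement, and each exchange is checked correctly: vertex sums are preserved (which in particular keeps every edge value in $\{0,\dots,k\}$), the total weight never decreases, and a new $0$-edge is created, contradicting the choice of $h$.
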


Let $G$ be a bipartite graph and $h$ a maximum integer $k$-matching of $G$ with the maximum number of 0-edges. Then every component of $H$ is a single edge by Lemma~\ref{lem:1.1}. So $E(H)$ is a matching. Then $\mu_{k}(G)\leq k\mu(G)$, where $\mu_{k}(G)$, named integer $k$-matching number of $G$, is the maximum value of $\sum\limits_{e\in E(G)}h(e)$ over all integer $k$-matching $h$. On the other hand, for a maximum matching $M$, we can obtain an integer $k$-matching by assigning $k$ to each edge in $M$ and $0$ to other edges. So $\mu_{k}(G)\geq k\mu(G)$. Then we have the following corollary.

\begin{coro}\label{cor:1}
If $G$ is a bipartite graph, then $\mu_{k}(G)=k\mu(G)$.
\end{coro}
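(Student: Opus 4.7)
The plan is to read off the result directly from Lemma~\ref{lem:1.1}, using bipartiteness to rule out one of the two possible component types listed there. Most of the argument is already sketched in the paragraph just above the corollary; my job is to make explicit why an odd-cycle-tree graph cannot appear as a subgraph of a bipartite graph, and then to verify both inequalities.

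First I would fix a maximum integer $k$-matching $h$ of $G$ with the maximum number of $0$-edges, let $H$ be the subgraph induced by the edges $e$ with $h(e)\neq 0$, and invoke Lemma~\ref{lem:1.1}: every component of $H$ is either a single edge or an odd-cycle-tree graph. The key observation is that an odd-cycle-tree graph, by definition, has $\delta\geq 2$ and every block equal to $K_2$ or to an odd cycle; if all blocks were $K_2$ then the graph would be a tree and hence have a leaf, contradicting $\delta\geq 2$, so at least one block is an odd cycle. Since $G$ is bipartite and contains no odd cycle, no component of $H$ can be an odd-cycle-tree graph. Therefore every component of $H$ is a single edge, i.e.\ $E(H)$ is a matching of $G$.

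For the upper bound I would then estimate
\[
\mu_k(G)=\sum_{e\in E(G)}h(e)=\sum_{e\in E(H)}h(e)\leq k\cdot|E(H)|\leq k\mu(G),
\]
where the first inequality uses $h(e)\leq k$ and the second uses that $E(H)$ is a matching. For the reverse inequality I would take any maximum matching $M$ of $G$ and define $h'(e)=k$ for $e\in M$ and $h'(e)=0$ otherwise; this is a valid integer $k$-matching since each vertex is incident to at most one edge of $M$, and its total weight is $k|M|=k\mu(G)$, giving $\mu_k(G)\geq k\mu(G)$. Combining the two bounds yields $\mu_k(G)=k\mu(G)$.

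I do not anticipate a real obstacle here: once Lemma~\ref{lem:1.1} is in hand, the only non-bookkeeping step is the short argument that an odd-cycle-tree graph must contain an odd cycle (and hence cannot embed in a bipartite graph), which is immediate from the definition together with $\delta\geq 2$.
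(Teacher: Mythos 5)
Your proposal is correct and follows essentially the same route as the paper: apply Lemma~\ref{lem:1.1} to a maximum integer $k$-matching with the most $0$-edges, use bipartiteness to exclude odd-cycle-tree components so that $E(H)$ is a matching, and then verify the two inequalities exactly as in the paragraph preceding the corollary. Your explicit justification that an odd-cycle-tree graph must contain an odd cycle (via $\delta\geq 2$) is a small detail the paper leaves implicit, but it is the same argument.
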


By the above corollary, we have the following corollary.
\begin{coro}\label{cor:2}
Let $G$ be a bipartite graph. Then $G$ has a perfect integer $k$-matching if and only if $G$ has a perfect matching.
\end{coro}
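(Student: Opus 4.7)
The plan is to deduce this corollary from Corollary~\ref{cor:1} together with the definition of a perfect integer $k$-matching.

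First I would dispose of the easy direction. If $G$ has a perfect matching $M$, then assigning $h(e)=k$ for $e\in M$ and $h(e)=0$ otherwise yields a function $h\colon E(G)\to\{0,1,\ldots,k\}$ with $\sum_{e\in\Gamma(v)}h(e)=k$ for every $v\in V(G)$, since each vertex is saturated by exactly one edge of $M$. This is the observation already made in the introduction, so it needs only one sentence.

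For the reverse direction, suppose $h$ is a perfect integer $k$-matching of $G$. Then
\[
\sum_{e\in E(G)}h(e)=\tfrac{1}{2}\sum_{v\in V(G)}\sum_{e\in\Gamma(v)}h(e)=\tfrac{k|V(G)|}{2},
\]
so in particular $\mu_k(G)\geq k|V(G)|/2$. Invoking Corollary~\ref{cor:1} (which gives $\mu_k(G)=k\mu(G)$ for bipartite $G$), I get $k\mu(G)\geq k|V(G)|/2$, i.e.\ $\mu(G)\geq |V(G)|/2$. Combined with the trivial upper bound $\mu(G)\leq |V(G)|/2$, this forces $\mu(G)=|V(G)|/2$, which means $G$ has a perfect matching. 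Implicit here is the fact that $|V(G)|$ is even, which follows automatically: since $k$ is odd and $k|V(G)|/2$ must be an integer, $|V(G)|$ is even.

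There is no real obstacle — the whole content has been packaged into Corollary~\ref{cor:1}, whose proof used Lemma~\ref{lem:1.1} to reduce a maximum integer $k$-matching of a bipartite graph to (a $k$-multiple of) a matching. Corollary~\ref{cor:2} is just the extremal specialization of that identity, so the proof is essentially a one-line deduction in each direction.
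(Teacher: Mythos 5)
Your proposal is correct and follows exactly the route the paper intends: the paper derives Corollary~\ref{cor:2} directly from Corollary~\ref{cor:1} (stating only ``by the above corollary''), and your argument --- trivial forward direction, plus $\mu_k(G)\ge k|V(G)|/2$ combined with $\mu_k(G)=k\mu(G)$ to force $\mu(G)=|V(G)|/2$ --- is precisely the intended deduction, spelled out in full.
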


Obviously, if $G$ is a bipartite graph with the odd number of vertices, then $\mu(G)\leq\frac{|V(G)|-1}{2}$. By Corollary~\ref{cor:1}, $\mu_{k}(G)=k\mu(G)\leq\frac{k|V(G)|-k}{2}<\frac{k|V(G)|-1}{2}$. So we get the following corollary.

\begin{coro}\label{cor:3}
Let $G$ be a bipartite graph with the odd number of vertices. Then $G$ has no almost perfect integer $k$-matching.
\end{coro}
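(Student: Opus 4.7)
The plan is to compare the total weight that an almost perfect integer $k$-matching would force against the maximum weight $\mu_k(G)$ that any integer $k$-matching of $G$ can attain, and show the latter is strictly smaller.

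First I would unwind the definition: if $h$ were an almost perfect integer $k$-matching of $G$, then $\sum_{e\in E(G)} h(e)=\tfrac{k|V(G)|-1}{2}$, so in particular $\mu_k(G)\geq \tfrac{k|V(G)|-1}{2}$. So it suffices to derive the reverse strict inequality $\mu_k(G)<\tfrac{k|V(G)|-1}{2}$ directly from the bipartite structure.

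The main step is to apply Corollary~\ref{cor:1}, which gives $\mu_k(G)=k\mu(G)$ for bipartite $G$. Since $|V(G)|$ is odd, no matching can saturate every vertex of the bipartite graph $G$, so $\mu(G)\leq \tfrac{|V(G)|-1}{2}$. Combining these, $\mu_k(G)\leq \tfrac{k(|V(G)|-1)}{2}=\tfrac{k|V(G)|-k}{2}$. Because the paper restricts attention to odd $k\geq 3$, in particular $k>1$, this is strictly less than $\tfrac{k|V(G)|-1}{2}$, contradicting the lower bound above.

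There is no real obstacle: the substantive content was already packaged into Corollary~\ref{cor:1}, and once that is in hand the rest is arithmetic. The only point worth flagging is that the final strict inequality requires $k\geq 2$; this is automatic from the paper's standing hypothesis that $k\geq 3$ is odd, but if one wanted the statement for $k=1$ it would degenerate into the (true but trivial) fact that a bipartite graph of odd order has no almost-perfect matching.
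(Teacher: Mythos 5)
Your argument is exactly the paper's: it bounds $\mu(G)\leq\frac{|V(G)|-1}{2}$ for a bipartite graph of odd order, invokes Corollary~\ref{cor:1} to get $\mu_k(G)=k\mu(G)\leq\frac{k|V(G)|-k}{2}<\frac{k|V(G)|-1}{2}$, and concludes that the total weight required of an almost perfect integer $k$-matching cannot be attained. The proposal is correct, and your remark that the strict inequality needs $k\geq 2$ (guaranteed by the standing hypothesis that $k\geq 3$ is odd) is a fair point of care.
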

\begin{figure}\label{Fig:5}
\begin{center}
\includegraphics[scale=0.345]{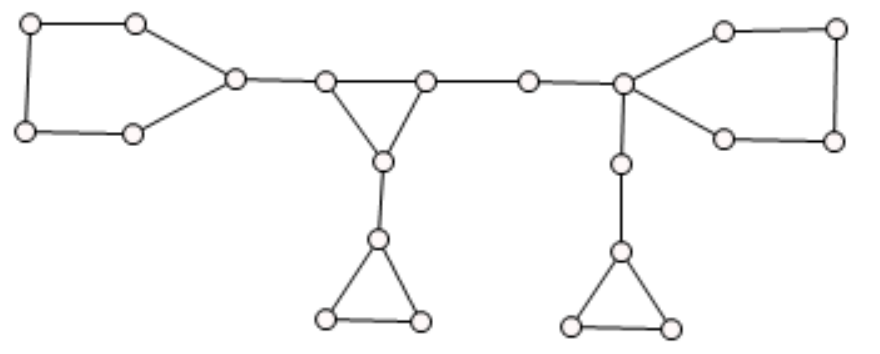}\\
Fig. 5. Odd-cycle-tree graph
\end{center}
\end{figure}
\begin{thm}\label{cor:4}
Let $G$ be a bipartite graph. Then $mp^{k}(G)=smp^{k}(G)=0$ if $G$ is odd. Otherwise, $mp^{k}(G)=mp(G)$ and $smp^{k}(G)\leq smp(G)$.
\end{thm}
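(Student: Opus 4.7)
The plan is to split on the parity of $|V(G)|$ and use Corollaries \ref{cor:2} and \ref{cor:3} to reduce integer $k$-matching questions on the bipartite graph $G$ (and on the various graphs $G-F$, which remain bipartite) to ordinary matching questions.

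If $G$ is odd, then both a perfect and an almost perfect integer $k$-matching of $G$ are ruled out immediately: the first by the parity obstruction that $\frac{k|V(G)|}{2}$ is not an integer (since $k$ and $|V(G)|$ are both odd), and the second by Corollary \ref{cor:3}. Hence the empty set is already both an $MP^{k}$ set and an $SMP^{k}$ set, so $mp^{k}(G)=smp^{k}(G)=0$.

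Suppose instead that $G$ is even. For the equality $mp^{k}(G)=mp(G)$ I would show that a set $F\subseteq E(G)$ is an $MP^{k}$ set of $G$ if and only if it is an $MP$ set. The crucial observation is that $G-F$ is still bipartite and on an even vertex set, so it has no almost perfect integer $k$-matching and no almost perfect matching; Corollary \ref{cor:2} applied to $G-F$ then converts ``$G-F$ has no perfect integer $k$-matching'' into ``$G-F$ has no perfect matching''. The two families of preclusion sets therefore coincide, forcing the numerical equality.

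For $smp^{k}(G)\le smp(G)$ I would take an optimal $SMP$ set $F\subseteq V(G)\cup E(G)$ and verify that it is also an $SMP^{k}$ set, splitting on the parity of $|V(G-F)|$. When $|V(G-F)|$ is even, the argument of the previous paragraph applies almost verbatim: parity kills the almost perfect case, and Corollary \ref{cor:2} transports the absence of a perfect matching (guaranteed because $F$ is $SMP$) into the absence of a perfect integer $k$-matching. When $|V(G-F)|$ is odd, $G-F$ admits no perfect integer $k$-matching (again by the parity of $\frac{k|V(G-F)|}{2}$) and no almost perfect integer $k$-matching (Corollary \ref{cor:3}), so $F$ is automatically $SMP^{k}$ without even using that it was chosen to be $SMP$. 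I do not expect any genuine obstacle; the only subtlety worth flagging is that in this last case $F$ need not have been $SMP$ at all, which is precisely why the bound can be strict and only an inequality is asserted.
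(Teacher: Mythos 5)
Your proposal is correct and follows essentially the same route as the paper: rule out both kinds of integer $k$-matchings in the odd case via Corollary~\ref{cor:3} (or parity), and in the even case use Corollary~\ref{cor:2} to identify $MP^{k}$ sets with $MP$ sets and to show that any minimum $SMP$ set is an $SMP^{k}$ set. Your explicit case split on the parity of $|V(G-F)|$ in the strong case is slightly more careful than the paper's one-line appeal to the corollaries, but it is the same argument.
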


\begin{proof} Let $G$ is odd. By Corollary~\ref{cor:2} and Corollary~\ref{cor:3}, $G$ contains neither perfect integer $k$-matching nor almost-perfect integer $k$-matching. Thus $mp^{k}(G)=smp^{k}(G)=0$. Let $G$ be even. By Corollary~\ref{cor:2}, $mp^{k}(G)=mp(G)$. Let $F$ be a minimum $SMP$ set of $G$. Then $|F|=smp(G)$ and $G-F$ contains neither perfect matching nor almost perfect mathing. By Corollary~\ref{cor:1} and Corollary~\ref{cor:2}, $G-F$ contains no perfect integer $k$-matching and almost-perfect integer $k$-matching. So $F$ is a strong integer $k$-matching preclusion set. Thus $smp^{k}(G)\leq smp(G)$.
\end{proof}

Note that the equivalence $"smp^{k}(G)= smp(G)"$ is not true. For example, $smp^{k}(C_4)=1$ and $smp(C_4)=2$.

\subsection{$SMP^{k}$ numbers of $A_{n,s}$ ($2\leq s\leq n-2$)}
we investigate the $SMP^{k}$ number of $A_{n,s}$ in the following two cases that $3\leq s\leq n-2$ and $s=2$.

\begin{thm}\label{Th3}
Let $n$ and $s$ be integers such that $3\leq s\leq n-2$. Then $smp^{k}(A_{n,s})=s(n-s)$.
\end{thm}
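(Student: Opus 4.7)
The plan is to establish both bounds. The upper bound $smp^{k}(A_{n,s}) \leq s(n-s)$ is immediate from the universal inequality $smp^{k}(G) \leq \delta(G)$ stated earlier: removing the $s(n-s)$ edges incident with any vertex $v$ isolates $v$, and since $k \geq 3$ an isolated vertex has $\Gamma(v)$-sum $0$, which is neither $k$ nor $k-1$. For the lower bound I fix an arbitrary $F \subseteq V(A_{n,s}) \cup E(A_{n,s})$ with $|F| = s(n-s) - 1$ and show that $G' := A_{n,s} - F$ admits a perfect or almost-perfect integer $k$-matching. Since $|F| < \delta(A_{n,s})$, we have $\delta(G') \geq 1$, and I split on $\delta(G')$.

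Suppose first $\delta(G') = 1$. Let $v$ be a vertex of degree $1$ in $G'$ and $u$ its unique surviving neighbor. A counting argument shows that the $s(n-s) - 1$ missing neighbors of $v$ account for all of $F$: each missing neighbor contributes either a deleted vertex in $F \cap V$ or the other endpoint of a deleted edge in $F \cap E$, and these $s(n-s)-1$ items already saturate $|F|$. In particular $F \cap V \subseteq N_{A_{n,s}}(v) \setminus \{u\}$, $F \cap E \subseteq \Gamma(v)$, and $G' - \{u, v\} = A_{n,s} - (\{u,v\} \cup (F \cap V))$. When $|F \cap E| \geq 3$, Lemma~\ref{Thm:1.8} (applied with the $|F \cap V| + 2 \leq s(n-s) - 2$ vertex faults $\{u,v\} \cup (F \cap V)$) gives a Hamiltonian cycle $C$ of $G' - \{u, v\}$; the alternating weight $\{(k-1)/2, (k+1)/2\}$ on $C$, with a single $(k-1)/2$ repeat at one chosen vertex when $|V(C)|$ is odd (legal since $k$ is odd), combined with $h(uv) = k$, delivers the required integer $k$-matching of $G'$. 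The boundary regime $|F \cap E| \leq 2$ forces most of $F$ to be vertex faults inside $N(v)$ and needs a direct rerouting argument using the cross-edge abundance of $A_{n,s}$.

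Suppose instead $\delta(G') \geq 2$. The goal is to show $G'$ is Hamiltonian, after which the alternating construction above finishes the proof. Since Lemma~\ref{Thm:1.8} only handles $|F| \leq s(n-s) - 2$, I gain the missing fault via the $n$-way decomposition $A_{n,s} = H_1 \cup \cdots \cup H_n$ with $H_i \cong A_{n-1, s-1}$ and the $(n-2)!/(n-s-1)!$ cross edges between each pair $H_i, H_j$: distribute $F$ among the $H_i$'s and the cross edges, apply Lemma~\ref{Thm:1.8} inductively in each $H_i$ whose fault count stays below the threshold $(s-1)(n-s) - 2$, and stitch the resulting Hamiltonian cycles into a Hamiltonian cycle of $G'$ using non-faulty cross edges. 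The main obstacle is this gluing: when faults concentrate in one or two $H_i$'s one must replace Hamiltonicity there by a Hamiltonian path between prescribed endpoints, and the parities of $|V(H_i) - F_i|$ across the $n$ pieces must be balanced so that at most one vertex is left unsaturated in the stitched-together integer $k$-matching. The scheme closely parallels the argument for $smp(A_{n,s})$ in \cite{8ECO}.
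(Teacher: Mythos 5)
Your upper bound and the degree-one reduction are fine as far as they go, but the proof has two genuine gaps. First, in the $\delta(G')=1$ branch you only handle $|F\cap E|\geq 3$; the complementary regime, where $F$ consists almost entirely of deleted vertices inside $N(v)$, is dismissed with ``needs a direct rerouting argument'' --- yet this is exactly the kind of concentrated-fault configuration that makes the theorem nontrivial, so nothing is actually proved there. Second, and more seriously, your $\delta(G')\geq 2$ branch rests on the claim that $A_{n,s}-F$ is Hamiltonian whenever $|F|=s(n-s)-1$ and the minimum degree stays at least $2$. That is a conditional fault-Hamiltonicity statement one notch beyond Lemma~\ref{Thm:1.8}; it is not among the quoted lemmas, and your ``distribute the faults, apply the lemma inductively, and stitch'' outline names precisely the hard part (faults concentrating in one or two $H_i$'s, Hamiltonian paths with prescribed endpoints, parity balancing across the $n$ pieces) without carrying it out.

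The paper sidesteps both difficulties. When $A_{n,s}-F$ is even it simply invokes $smp(A_{n,s})=s(n-s)$ (Lemma~\ref{Thm:1.10}) to get a perfect matching of $A_{n,s}-F$, hence a perfect integer $k$-matching --- no Hamiltonicity of the damaged graph is needed. When $A_{n,s}-F$ is odd it never builds a global Hamiltonian cycle: it applies Lemma~\ref{Thm:1.8} only inside the subgraphs $H_i\cong A_{n-1,s-1}$ (threshold $(s-1)(n-s)-2$), gives each even piece $G_i$ a perfect integer $k$-matching from its Hamiltonian cycle, pairs odd pieces through surviving cross edges to obtain even Hamiltonian paths, and leaves exactly one odd Hamiltonian piece to carry the almost perfect integer $k$-matching. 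Assembling the $k$-matching piecewise is strictly weaker than, and much easier than, proving $A_{n,s}-F$ Hamiltonian. The residual difficulty, when $|F_1|\geq (s-1)(n-s)$, is resolved by routing the unsaturated vertices of $H_1-F_1$ to distinct neighboring subgraphs via a separate counting claim. To repair your argument you would need to either prove the conditional Hamiltonicity result you are assuming (and still close the $|F\cap E|\leq 2$ subcase) or switch to this piecewise construction.
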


\begin{proof}
We have that $smp^{k}(A_{n,s})\leq \delta(A_{n,s})=s(n-s)$. Next we prove that $smp^{k}(A_{n,s})\geq s(n-s)$. It suffices to prove that $A_{n,s}-F$ has a perfect integer $k$-matching or an almost perfect integer $k$-matching for any $F\subseteq V(A_{n,s})\cup E(A_{n,s})$ with that $|F|=s(n-s)-1$. Let $F_{i}=F\cap (V(H_{i})\cup E(H_{i}))$ and $G_{i}=H_{i}-F_{i}$ for any $i\in\{1,\ 2, \cdots, n\}$. Then $F-\cup_{i=1}^{n} F_i$ consists of cross edges in $A_{n,s}$. Without loss of generality, suppose that $|F_{1}| \geq |F_{2}|\geq \cdots \geq |F_{n}|$.

If $A_{n,s}-F$ is even, then $A_{n,s}-F$ has a perfect matching since $smp(A_{n,s})=s(n-s)$ by Lemma \ref{Thm:1.10}. It follows that $A_{n,s}-F$ has a perfect integer $k$-matching.
Suppose that $A_{n,s}-F$ is odd. Then the number of odd $G_{i}$ among $G_{1}, G_{2},\cdots,G_{n}$ is odd. We distinguish the following cases to prove that $A_{n,s}-F$ has an almost perfect integer $k$-matching.

\noindent\textbf{Case 1.}\ $|F_{1}|\leq (s-1)(n-s)-2$.

Then $|F_i|\leq |F_{1}|\leq (s-1)(n-s)-2$ for any $i\geq2$. By Lemma \ref{Thm:1.8}, $G_i$ ($i\geq1$) is Hamiltonian. Then every even $G_i$ has a perfect integer $k$-matching.
If there exists exactly one odd subgraph $G_i$ among $G_1,G_2,\cdots,G_n$, then $G_i$ has an almost perfect integer $k$-matching. So $A_{n,s}-F$ has an almost perfect integer $k$-matching. Suppose that there exists at least three odd subgraphs. For any two odd subgraphs $G_{i}$ and $G_{j}$,
$|E_{A_{n,s}-F}(V(G_{i}),V(G_{j}))|\geq |E_{A_{n,s}}(V(H_{i}),V(H_{j}))|-|F|= \frac{(n-2)!}{(n-s-1)!}-(s(n-s)-1)\geq 1$ according to the property that the cross edges in $E_{A_{n,s}}(V(H_{i}),V(H_{j}))$ are independent.
Then we can find an even Hamiltonian path of the subgraph induced by $V(G_i)\cup V(G_j)$ in $A_{n,s}-F$. Hence there exists a perfect integer $k$-matching of the subgraph induced by $V(G_i)\cup V(G_j)$. It follows that $A_{n,s}-F$ has an almost perfect integer $k$-matching.

\noindent\textbf{Case 2.}\ $|F_{1}|=(s-1)(n-s)-1$.

Then $|F_{i}|\leq |F-F_{1}|= n-s\leq (s-1)(n-s)-2$. By Lemma \ref{Thm:1.8}, $G_{i}\ (i\geq2)$ is Hamiltonian and $G_1$ has a Hamiltonian path, say $P$.

If $G_{1}$ is even, then $G_1$ has a perfect integer $k$-matching. For any two subgraphs $G_{i}$ and $G_{j}$ ($2\leq i<j\leq n$), $|E_{A_{n,s}-F}(V(G_{i}),V(G_{j}))|\geq |E_{A_{n,s}}(V(H_{i}),V(H_{j}))|-|F-F_{1}|=\frac{(n-2)!}{(n-s-1)!}-(n-s)\geq 1.$ By the similar method in Case 1 as above, we can find an almost perfect integer $k$-matching of $A_{n,s}-F$.

Suppose that $G_{1}$ is odd. Let $u$ and $v$ be two end-vertices of $P$. Since $|F-F_{1}|= n-s$ and $u$ and $v$ have $2(n-s)$ outer neighbors in $A_{n,s}$, we can assume that $u$ has an outer neighbor in $A_{n,s}-F$. Since $u$ is adjacent to $n-s$ subgraphs and $|F-F_{1}|= n-s$, there is a subgraph $G_t$ adjacent to $u$ such that $|F_t|\leq 1$.
Suppose that $w\in V(G_t)$ such that $uw\in E(A_{n,s}-F)$. Then the subgraph induced by $V(G_1)\cup \{w\}$ of $A_{n,s}-F$ has a perfect integer $k$-matching. Let $F_t'=F_t\cup \{w\}$. Then $|F_t'|\leq2$. By Lemma \ref{Thm:1.8}, $G_t'=H_t-F_t'$ is Hamiltonian. Similarly, $|E_{A_{n,s}-F}(V(G_{i}),V(G_{j}))|\geq1$ for any pair of subgraphs $G_i$ and $G_j$ among $G_2,\cdots,G_t',\cdots,G_n$. Then we can obtain an almost perfect integer $k$-matching of $A_{n,s}-F$ by the similar method in Case 1 as above.

\noindent\textbf{Case 3.}\ $|F_{1}|=(s-1)(n-s)+\alpha$, where $0\leq \alpha \leq n-s-1$.

We can choose a subset $S$ of $F_1$ such that $|S|=\alpha+1$ and $F_1-S$ has even number of vertices. Let $F_{1}'=F_{1}-S$. Then $|F_{1}'|=(s-1)(n-s)-1$ and $H_{1}-F_{1}'$ is even. By Lemma \ref{Thm:1.10}, $H_{1}-F_{1}'$ has a perfect matching, say $M$. Let $A=S\cap V(H_1)=\{v_{1},v_{2},\cdots,v_{t}\}$, $B=S\cap E(H_1)=\{w_{1}s_{1},w_{2}s_{2},\cdots,w_{r}s_{r}\}$ and $T$ be the set of all $M$-unsaturated vertices of $G_1$. Since $G_1=H_1-F_1=(H_1-F_1')-S$, we can assume that $T=\{z_1,\cdots,z_{t'},w_{1},\cdots,w_{r'},s_{1},\cdots,s_{r'}\}$, where $v_1z_1,\cdots,v_{t'}z_{t'}\in M_1$. Then $0\leq t'\leq t$, $0\leq r'\leq r$. Thus $|T|=t'+2r'\leq 2(\alpha+1)$.

Since $|F-F_{1}|=n-s-1-\alpha\leq n-s-1$ and each vertex has $n-s$ outer neighbors in $A_{n,s}$, each vertex in $T$ has at least one outer neighbor in $A_{n,s}-F$.
Let $T'=\{z_{1}',\cdots,z_{t'}',w_{1}',\cdots,w_{r'}',s_{1}',\cdots,s_{r'}'\}$, where $z_{i}'$, $w_{j}'$ and $s_{j}'$ are the outer neighbor of $z_{i}$, $w_{j}$ and $s_{j}$ in $T$, respectively. Then $|T'|=|T|\leq2(\alpha+1)$ and the subgraph induced by $V(G_1)\cup T'$ has a perfect matching. Thus the subgraph induced by $V(G_1)\cup T'$ has a perfect integer $k$-matching.
Furthermore, in $A_{n,s}-F$, there exists at most one vertex in $T$ adjacent only to $G_2$ since any pair of vertices of $G_1$ have not common outer neighbors and $|F-F_{1}|\leq n-s-1$. Thus $|T'\cap V(G_2)|\leq 1$. Let $F_{i}'=F_i\cup (T'\cap V(G_i))$  and $G_i'=H_i-F_i'$ for any $i\geq2$.

Now we consider the case that $|T|\geq n-s-1$ and each vertex in $T$ has at least $n-s-1$ outer neighbors in $A_{n,s}-F$. Then we can find $n-s-1$ vertices in $T$, say $\{u_1,\cdots,u_{n-s-1}\}$, such that the outer neighbors $u_1',\cdots,u_{n-s-1}'$ of $ u_1,\cdots,u_{n-s-1}$ belong to $n-s-1$ distinct subgraphs among $G_2,\cdots,G_n$, respectively, and $|(\{u_1',\cdots,u_{n-s-1}'\}\cup T')\cap V(G_2)|\leq1$. Replace the outer neighbors in $T'$ of $\{u_1,\cdots,u_{n-s-1}\}$ with $\{u_1',\cdots,u_{n-s-1}'\}$, the resulting set is still denoted by $T'$. Hence there are at least $n-s-1$ subgraphs $G_i$ $(i\geq2)$ such that $V(G_i)\cap T'\neq \emptyset$ and $|T'\cap V(G_2)|\leq 1$ and the subgraph induced by $V(G_1)\cup T'$ has also a perfect integer $k$-matching.

\textbf{Claim.} $|F_i'|\leq (s-1)(n-s)-2$.

If $0\leq\alpha\leq n-s-3$ or $|T'|\leq 2\alpha$, then $|F_i'|=|F_i\cup (T'\cap V(G_i))|\leq |F-F_1|+|T'|\leq2(n-s)-2\leq(s-1)(n-s)-2$.
Suppose that $n-s-2\leq\alpha\leq n-s-1$ and $|T'|\geq 2\alpha+1$. Then $|T|=|T'|\geq n-s-1$, $|F_2|\leq|F-F_1|=n-s-1-\alpha\leq1$ and $|F_i|=0$ ($i\geq3$).
Since $|T'\cap V(G_2)|\leq 1$, $|F_2'|=|F_2\cup (T'\cap V(G_2))|=|F_2|+|T'\cap V(G_2)|\leq2\leq(s-1)(n-s)-2$.
Suppose that $i\geq3$. Then $F_i'=T'\cap V(G_i)$. If $|T'|\leq2$, then $|F_i'|= |T'\cap V(G_i)|\leq|T'|=2\leq(s-1)(n-s)-2$. Suppose that $|T'|\geq3$. Since $|F-F_1|\leq1$ and each vertex of $A_{n,s}$ has $n-s$ outer neighbors, each vertex in $T$ has at least $n-s-1$ outer neighbors in $A_{n,s}-F$. In this case, there are at least $n-s-1$ subgraphs $H_i$ $(i\geq2)$ such that $V(H_i)\cap T'\neq \emptyset$.
Thus $|F_i'|=|T'\cap V(G_i)|\leq |T'|-(n-s-2)\leq2(\alpha+1)-(n-s-2)\leq2(n-s)-2\leq(s-1)(n-s)-2$. Thus the claim is true.

By the claim and Lemma \ref{Thm:1.8}, $G_i'$ ($i\geq 2$) is Hamiltonian.
For any two subgraphs $G_{i}'$ and $G_{j}'$ ($2\leq i<j\leq n$), $|E_{A_{n,s}-F}(V(G_{i}'),V(G_{j}'))|\geq |E_{A_{n,s}}(V(H_{i}),V(H_{j}))|-|(F-F_{1})\cup T'|\geq \frac{(n-2)!}{(n-s-1)!}-2(n-s)\geq 1$.
Hence we can obtain an almost perfect integer $k$-matching of the subgraph induced by $V(G_{2}')\cup \cdots \cup V(G_{n}')$ in $A_{n,s}-F$. Thus $A_{n,s}-F$ has an almost perfect integer $k$-matching.
\end{proof}

Next, we consider the case that $s=2$. Then a vertex of $A_{n,2}$ is denoted by $ij$, where $i,j\in \{1,2,\cdots,n\}$. For convenience, the notations $H_i$, $F_i$ and $G_i$ in the proof of Theorem \ref{Th3} are used in the following. In this case, $H_i=K_{n-1}$. According to the definition of $A_{n,s}$, there are exactly $n-2$ cross edges between $H_{i}$ and $H_{j}$ in $A_{n,2}$ which are independent.
Each vertex in $A_{n,2}$ have $n-2$ outer neighbors which belong to $n-2$ different subgraphs. Moreover, any pair of vertices in $V(H_i)$ are adjacent to $n-1$ subgraphs in $A_{n,2}$.

\begin{thm}\label{Th4}
Let $n \geq5$ be an integer. Then $smp^{k}(A_{n,2})=2n-4$.
\end{thm}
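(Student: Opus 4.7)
The plan is to prove both bounds separately, mirroring Theorem~\ref{Th3} but accommodating the smaller cross-edge budget at $s=2$. For the upper bound, the set of edges and vertices incident or adjacent to any fixed vertex is an $SMP^k$ set of size $\delta(A_{n,2})=2(n-2)$, so $smp^k(A_{n,2})\leq 2n-4$.

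For the lower bound, I would show that $A_{n,2}-F$ has a perfect or almost perfect integer $k$-matching for every $F\subseteq V(A_{n,2})\cup E(A_{n,2})$ with $|F|=2n-5$. Decompose as in Theorem~\ref{Th3}: $F_i=F\cap (V(H_i)\cup E(H_i))$, $G_i=H_i-F_i$ with $H_i\cong K_{n-1}$, and assume $|F_1|\geq\cdots\geq|F_n|$. If $A_{n,2}-F$ is even, Lemma~\ref{Thm:1.10} (giving $smp(A_{n,2})=2n-4$) produces a perfect matching, hence a perfect integer $k$-matching. Otherwise $A_{n,2}-F$ is odd, so an odd number of the $G_i$ are odd. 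Split on $|F_1|$ into three cases paralleling Theorem~\ref{Th3}: (i) $|F_1|\leq n-4$; (ii) $|F_1|=n-3$; (iii) $|F_1|=n-2+\alpha$ with $0\leq\alpha\leq n-3$.

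In Case (i), Lemma~\ref{lem:3.10} (since $K_{n-1}$ is $(n-4)$-fault Hamiltonian) makes every $G_i$ Hamiltonian. Alternate edges of an even Hamiltonian cycle yield a perfect integer $k$-matching on that $G_i$, while an odd Hamiltonian cycle yields an almost perfect integer $k$-matching by the alternating assignment $(k-1)/2$ and $(k+1)/2$ (valid because $k\geq 3$ is odd), with any cycle vertex available as the unsaturated vertex. If exactly one $G_i$ is odd, these pieces combine directly. If $d\geq 3$ of them are odd, I pair up $d-1$ of them via surviving cross edges: on a pair $G_i,G_j$ joined by a cross edge $uv$ (with the almost perfect $k$-matchings of $G_i,G_j$ unsaturating $u$ and $v$), setting $h(uv)=1$ produces a perfect integer $k$-matching on $V(G_i)\cup V(G_j)$. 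The pairing exists because severing all $n-2$ cross edges of a single pair $(H_i,H_j)$ costs at least $n-2$ deletions attributable to that pair, and a short counting argument (using that every vertex of $A_{n,2}$ has $n-2$ outer neighbors in $n-2$ distinct subgraphs) shows the budget $|F|=2n-5$ cannot disconnect enough pairs of odd $G_i$'s in the auxiliary adjacency graph to obstruct a near-perfect matching.

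Cases (ii) and (iii) follow the structure of the corresponding cases in Theorem~\ref{Th3}, with Lemmas~\ref{lem:3.10} and~\ref{lem:3.8} replacing Lemma~\ref{Thm:1.8} as sources of Hamiltonicity for $G_1$, and Lemma~\ref{Thm:1.10} providing the perfect matching of $H_1-(F_1-S)$ after a parity-fixing trim $S\subset F_1$ of size $\alpha+1$. The main obstacle is Case (iii): for $s=2$ each vertex has only $n-2$ outer neighbors and each pair only $n-2$ cross edges, so the Claim embedded in the proof of Theorem~\ref{Th3} -- controlling $|F_i'|$ for the enlarged fault sets $F_i'=F_i\cup(T'\cap V(G_i))$ -- becomes tight and must be reproved under these sharper bounds, ensuring $|F_i'|\leq n-4$ (or $|F_i'|\leq n-3$ together with $\delta(G_i')\geq 2$) and $|E_{A_{n,2}-F}(V(G_i'),V(G_j'))|\geq 1$ for any remaining odd pair. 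I expect the tightest subcases (large $\alpha$, nearly the entire budget concentrated in $F_1$) to require ad hoc verification, possibly by arranging the trim $S$ so that $A_{n,2}-F$ with the absorbed outer neighbors becomes even and one can finish through the perfect-matching branch via Lemma~\ref{Thm:1.10}.
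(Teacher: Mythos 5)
Your overall architecture (trivial upper bound $\delta(A_{n,2})=2n-4$; lower bound via the subgraph decomposition, the even case through Lemma~\ref{Thm:1.10}, and a case split on $|F_1|$) matches the paper, but two of your key steps have genuine gaps. The first is in your Case (i). When three or more $G_i$ are odd you must pair all but one of them through surviving cross edges, and for $s=2$ each pair $(H_i,H_j)$ has only $n-2$ cross edges while $|F|=2n-5>n-2$, so a pair can indeed be completely severed. Your counting argument, as stated, does not rule out the bad configuration: a single faulty vertex of $H_i$ lies on one cross edge to $H_j$ \emph{and} one to $H_k$, so faults are shared between pairs, and the naive bound (``each severed pair needs $n-2$ dedicated faults'') gives only $2|F|=4n-10\geq 3(n-2)$, which is no contradiction for $n\geq 4$. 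What is actually needed is the paper's Claim 1: among \emph{any} three subgraphs at least one pair keeps a cross edge, proved by showing that the doubly-effective faults $S_i\cup S_j\cup S_k$ are forced (by the labelling of $A_{n,2}$ and the independence of cross edges) to use distinct labels from $\{1,\dots,n\}\setminus\{i,j,k\}$, hence number at most $n-3$; this refinement is the crux and your sketch does not supply it.

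The second gap is in your Case (iii). You propose to get a perfect matching of $H_1-(F_1-S)$ from Lemma~\ref{Thm:1.10} after a parity-fixing trim, but for $s=2$ we have $H_1\cong K_{n-1}=A_{n-1,1}$, which is outside the hypothesis $2\leq s\leq n-2$ of that lemma, so the Theorem~\ref{Th3} Case 3 template does not transfer. The paper instead takes a maximum matching of $G_1$ directly, observes that the set $X$ of unsaturated vertices can have up to $n-2$ elements, and then must route these vertices to \emph{distinct} neighboring subgraphs so that no $G_i$ absorbs more than one extra vertex; this requires verifying Hall's condition for an auxiliary bipartite graph (Claims 2 and 3), itself a several-case count, followed by further subcases when $|F_1|=n-2$ where some $G_i'$ degenerates to $K_2$ or $K_3-e$. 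Deferring all of this to ``ad hoc verification'' leaves the hardest part of the theorem unproved. Your Case (ii) and the even/one-odd-component situations are fine.
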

\begin{proof}
Since $smp^{k}(A_{n,2})\leq \delta(A_{n,2})=2n-4$, we only need to prove that $smp^{k}(A_{n,2})\geq 2n-4$. It suffices to prove that $A_{n,2}-F$ has a perfect integer $k$-matching or an almost perfect integer $k$-matching for any $F\subseteq V(A_{n,2})\cup E(A_{n,2})$ with that $|F|=2n-5$.

If $A_{n,2}-F$ is even, then $A_{n,2}-F$ has a perfect matching since $smp(A_{n,2})=2n-4$ by Lemma \ref{Thm:1.10}. It follows that $A_{n,2}-F$ has a perfect integer $k$-matching. Suppose that $A_{n,2}-F$ is odd. Then the number of odd $G_{i}$ among $G_{1}, G_{2},\cdots,G_{n}$ is odd. We distinguish the following cases to prove that $A_{n,2}-F$ has an almost perfect integer $k$-matching.

\noindent\textbf{Case 1.} $|F_{1}|\leq n-4$.

Then $|F_{i}|\leq n-4$ for any $i\geq 2$. By Lemma \ref{lem:3.10}, $G_{i}$ $(i\geq1)$ is Hamiltonian. If there is exactly one odd subgraph among $G_1,G_2,\cdots,G_n$, then $A_{n,2}-F$ has an almost perfect integer $k$-matching clearly. Suppose that there exist at least three odd subgraphs among $G_1,G_2,\cdots,G_n$.

\textbf{Claim 1.} There exists at least one pair of subgraphs with a cross edge in $A_{n,2}-F$ among any three subgraphs $G_i,G_j,G_k$, where $1\leq i<j<k\leq n$.

For any three subgraphs $H_i,H_j,H_k$, let
$$S_i=F\cap \{v\in V(H_i)|\ v \ is \ adjacent \ to\ both \ H_j \ and \ H_k\},$$
$$S_j=F\cap\{\ v\in V(H_j)|\ v \ is \ adjacent \ to\ both \ H_i \ and \ H_k, \ but \ v \ is\ not\ adjacent\ to\ S_i\},$$
$$S_k=F\cap\{v\in V(H_k)|\ v \ is \ adjacent \ to\ both \ H_i \ and \ H_j, \ but \ v \ is\ adjacent\ to\ neither\ S_i \ nor \ S_j\}.$$
Suppose that $S_i=\{l_1i,l_2i,\cdots,l_pi\}$, where $\{l_1,l_2,\cdots,l_p\}\subseteq \{1,2,\cdots,n\}-\{i,j,k\}$.
Then $$S_j\subseteq\left\{lj|l\in \{1,2,\cdots,n\}-\{i,j,k\}-\{l_1,l_2,\cdots,l_p\}\right\}.$$
Let $S_j=\{l_{p+1}j,l_{p+2}j,\cdots,l_{p+p_1}j\}$. Then
$$S_k\subseteq\left\{lk|l\in \{1,2,\cdots,n\}-\{i,j,k\}-\{l_1,l_2,\cdots,l_p\}-\{l_{p+1},l_{p+2},\cdots,l_{p+p_1}\} \right\}.$$
Thus $|S_i|+|S_j|+|S_k|\leq n-3$. Let $m=|S_i|+|S_j|+|S_k|$. Then there are $3(n-2)-2m$ cross edges among $H_i-S_i,\ H_j-S_j$, $H_k-S_k$. And each vertex in $(F_i-S_i)\cup(F_j-S_j)\cup (F_k-S_k)$ is adjacent to exactly one subgraph among $H_i-S_i,\ H_j-S_j, \ H_k-S_k$. Let $S=S_i\cup S_j\cup S_k$. Then deleting an element in $F-S$ destroys at most one cross edge among $H_i-S_i,\ H_j-S_j$, $H_k-S_k$. Since $|F-S|=2n-5-m$, there are at most $2n-5-m$ cross edges among $H_i-S_i,\ H_j-S_j, H_k-S_k$ destroyed after deleting all elements in $F-S$. Since $(3(n-2)-2m)-(2n-5-m)=n-m-1\geq n-(n-3)-1=2$, Claim 1 is true.

By Claim 1, we can obtain an almost perfect integer $k$-matching of $A_{n,2}-F$ since each $G_i$ ($i\geq1$) is Hamiltonian.

\noindent\textbf{Case 2.} $|F_{1}|=n-3$.

Then $|F_{i}|\leq n-3$ for any $i\geq2$, $|F-F_1|=n-2$ and $G_{1}$ has a Hamiltonian path, say $P$. Let $x_1$ and $y_1$ be two end-vertices of $P$.

Firstly, suppose that $|F_{2}|\leq n-4$. By Lemma \ref{lem:3.10}, $G_{i}(i\geq 2)$ is Hamiltonian. If $G_{1}$ is even, then $A_{n,2}-F$ has an almost perfect integer $k$-matching since there exists at most a pair of subgraphs among $G_{2}, G_3, \cdots,G_n$ without cross edges.
Suppose that $G_{1}$ is odd. Since $|F-F_1|=n-2$ and $x_1$ and $y_1$ are adjacent to $n-1$ subgraphs $H_2,H_3,\cdots,H_n$, without loss of generality, we can assume that $w\in V(H_{n})$ such that $x_1w\in E(A_{n,2}-F)$ and $|F_n|=0$. Then the subgraph induced by $V(G_1)\cup \{w\}$ has a perfect integer $k$-matching and $G_n=H_{n}-\{w\}$ is Hamiltonian by Lemma \ref{lem:3.10}. Since $|(F-F_1)\cup \{w\}|=n-1$ and $|E_{A_{n,2}}(V(H_i),V(H_j))|=n-2$ ($2\leq i<j\leq n$), there exists at most a pair of subgraphs in $A_{n,2}-F$ among $G_{2}, G_3, \cdots,G_n$ without cross edges. Thus we can obtain an almost perfect integer $k$-matching of $A_{n,2}-F$.

Secondly, suppose that $|F_{2}|=n-3$. Then $|F_{3}|\leq1$ and $|F_{i}|=0(i\geq4)$. By Lemma \ref{lem:3.10}, $G_{i}$ $(i\geq3)$ is Hamiltonian and $G_{2}$ has a Hamiltonian path, say $Q$. Let $x_2$ and $y_2$ be two end-vertices of $Q$.
Since $|F-F_1-F_2|=1$ and $|F_i|=0$ ($i\geq4$), without loss of generality, we can assume that $x_1$ is adjacent to $H_4$ and $x_2$ is adjacent to $H_5$ in $A_{n,2}-F$. Let $x_1w_1$, $x_2w_2\in E(A_{n,2}-F)$, where $w_1\in V(H_4)$ and $w_2\in V(H_5)$. Let $M_1$ and $M_2$ be the maximum matching of $P$ and $Q$ such that the $M_1$-unsaturated and $M_2$-unsaturated vertices are $x_1$ and $x_2$ (if they exist), respectively. Let $X_1=\{w_1\}$ if $G_1$ is odd and $X_2=\{w_2\}$ if $G_2$ is odd. Otherwise, let $X_1=\emptyset$ and $X_2=\emptyset$. Thus the subgraph induced by $V(G_1)\cup X_1$ and $V(G_2)\cup X_2$ have a perfect matching, respectively. By Lemma \ref{lem:3.10}, $G_4=H_{4}-X_1$ and $G_5=H_{5}-X_2$ are Hamiltonian. Since $|F-F_1-F_2|=1$ and $|\{X_1\cup X_2\}|\leq2$, each pair of subgraphs among $G_3,G_{4},\cdots,G_n$ has at least one cross edge. Thus we can obtain an almost perfect integer $k$-matching of $A_{n,2}-F$.

\noindent\textbf{Case 3.} $n-1\leq |F_{1}|\leq 2n-5$.

Then $|F-F_1|\leq n-4$. Let $M$ be a maximum matching of $G_1$ and $X$ the set of $M$-unsaturated vertices of $G_1$. Since $H_1\cong K_{n-1}$ and $n
\geq 5$, $|X|\leq n-2$. Let $X=\{v_1,v_2,\cdots,v_l\}$, where $0\leq l\leq n-2$. Let $G_0=(X,Y)$ be a bipartite graph with that $Y=\{G_3,G_4,\cdots,G_n\}$ and $v_iG_j\in E(G_0)$ if and only if $v_i$ is adjacent to $G_j$ $(j\geq 3)$ in $A_{n,2}-F$.

\textbf{Claim 2.} $G_0$ contains a matching that saturates every vertex in $X$.

By the Hall's theorem, we only need to prove that $|N_{G_0}(S)|\geq |S|$ for any $S\subseteq X$.
Since $|F-F_{1}|\leq n-4$ and each vertex has $n-2$ outer neighbors in $A_{n,2}$, each vertex in $X$ has at least one outer neighbor in  $A_{n,2}-F-G_2$. Thus $|N_{G_0}(S)|\geq 1=|S|$ for any $S$ with that $|S|=1$.
Since each pair of vertices in $V(A_{n,2})$ are adjacent to $n-1$ different subgraphs $H_i$ and $|F-F_{1}|\leq n-4$, each pair of vertices in $X$ are adjacent to at least $(n-1)-1-(n-4)$ subgraphs among $G_3,G_4,\cdots,G_n$. So $|N_{G_0}(S)|\geq 2=|S|$ for any $S$ with that $|S|=2$.
Let $3\leq|S|\leq n-3$. Since $S$ has at least $|S|-1$ outer neighbors in $V(H_i)$ for each $i\geq3$, there are at most $\frac{n-4}{|S|-1}$ subgraphs $G_i$ ($i\geq3$) not adjacent to $S$ in $A_{n,2}-F$. It follows that $|N_{G_0}(S)|\geq (n-2)-\frac{n-4}{|S|-1}\geq |S|$.
Suppose that $|S|=n-2$. Since $|E_{A_{n,2}}(S, V(H_i))|\geq n-3$ for each $i\geq3$ and $|F-F_1|\leq n-4$, each subgraph $G_i\in Y$ is adjacent to at least one vertex in $X$ of $A_{n,2}-F$. Then $|N_{G_0}(S)|=|Y|=n-2=|S|$. Hence Claim 2 is true.

By Claim 2, we can assume that $v_1',v_2',\cdots,v_l'$ are the outer neighbor of $v_1,v_2,\cdots,v_l$, respectively, such that $v_1',v_2',\cdots,v_l'$ belong to $l$ distinct subgraphs among $G_3,G_4,\cdots,G_n$. Thus the subgraph induced by $V(G_1)\cup \{v_1',v_2',\cdots,v_l'\}$ has a perfect integer $k$-matching.
Let $F_i'=F_i\cup \left(V(G_i)\cap \{v_1',v_2',\cdots,v_l'\}\right)$ and $G_i'=H_i-F_i'$, where $i\geq2$. Then $F_2'=F_2$ and $|F_i'|\leq |F_2|+1$ for any $i\geq 3$.

If $|F_{2}|=0$, then $|F_i|=0$ for any $i\geq3$. Thus $|F_i'|\leq 1$ ($i\geq 3$). By Lemma \ref{lem:3.10}, $G_i'$ ($i\geq2$) is Hamiltonian. Hence $A_{n,2}-F$ has an almost perfect integer $k$-matching.
Suppose that $|F_2|\geq1$. Then $|F_i|\leq |F-F_{1}-F_2|\leq n-4-1=n-5$ for any $i\geq3$. So $|F_i'|\leq n-5+1=n-4$. Since $|F_2'|=|F_2|\leq |F-F_1|\leq n-4$, $G_{i}'(i\geq2)$ is Hamiltonian by Lemma \ref{lem:3.10}. Since $|F-F_{1}|\leq n-4$ and $|E_{A_{n,2}}(V(H_{i}),V(H_{j}))|=n-2$, there are at most one pair of subgraphs among $G_2',G_3',\cdots,G_n'$ without cross edges. Thus the subgraph induced by $V(G_{2}')\cup V(G_{3}')\cup\cdots\cup V(G_{n}')$ in $A_{n,2}-F$ has an almost perfect integer $k$-matching. Hence $A_{n,2}-F$ has an almost perfect integer $k$-matching.

\noindent\textbf{Case 4.} $|F_{1}|=n-2$.

Let $M$ be a maximum matching of $G_1$ and $X$ the set of $M$-unsaturated vertices of $G_1$. Then $|X|\leq n-3$ since $H_1\cong K_{n-1}$ and $|F_{1}|=n-2$. Let $X=\{v_1,v_2,\cdots,v_t\}$, where $0\leq t\leq n-3$. Let $G_0=(X,Y)$ be a bipartite graph with that $Y=\{G_2,G_3,\cdots,G_n\}$ and $v_iG_j\in E(G_0)$ if and only if $v_i$ is adjacent to $G_j$ in $A_{n,2}-F$.

\textbf{Claim 3.} $G_0$ contains a matching that saturates every vertex in $X$.

Similar to the method of the proof of Claim 2, we can prove that Claim 3 is true.

By Claim 3, we can assume that $v_1',v_2',\cdots,v_t'$ are the outer neighbor of $v_1,v_2,\cdots,v_t$, respectively, such that $v_1',v_2',\cdots,v_t'$ belong to $t$ distinct subgraphs among $G_2,G_3,\cdots,G_n$. Thus the subgraph induced by $V(G_1)\cup \{v_1',v_2',\cdots,v_t'\}$ has a perfect integer $k$-matching.
Let $F_i'=F_i\cup \left(V(G_i)\cap \{v_1',v_2',\cdots,v_t'\}\right)$ and $G_i'=H_i-F_i'$, where $i\geq2$. Then $F_i'=F_i$ or $F_i'=F_i\cup \{v_j'\}$. Note that $|F_2|\leq |F-F_1|=n-3$.

\noindent\textbf{Subcase 4.1.} $|F_{2}|\leq n-5$. Then $|F_{i}|\leq n-5$ ($i\geq3$). Thus $|F_{i}'|\leq n-4$ for any $i\geq2$. By Lemma \ref{lem:3.10}, $G_{i}'(i\geq2)$ is Hamiltonian. Since $|F-F_1|=n-3$, there exist at most two pair of subgraphs among $G_2',G_3',\cdots,G_n'$ without cross edges. Hence the subgraph induced by $V(G_{2}')\cup\cdots\cup V(G_{n}')$ in $A_{n,2}-F$ has an almost perfect integer $k$-matching. Thus $A_{n,2}-F$ has an almost perfect integer $k$-matching.

\noindent\textbf{Subcase 4.2.} $|F_{2}|=n-3$.
Then $F=F_1\cup F_2$. Thus $|F_2'|\leq n-2$ and $|F_i'|\leq1$ $(i\geq 3)$. By Lemma \ref{lem:3.10}, $G_{i}'\ (i\geq 3)$ is Hamiltonian and $G_2'$ contains a spanning subgraph which consist of two paths, say $P$ and $Q$, respectively. Let $x_1$ and $y_1$ are the two end-vertices of $P$ and $x_2$ and $y_2$ are the two end-vertices of $Q$. Since $t\leq n-3$, there is a subgraph among $H_3, H_4,\cdots,H_n$, say $H_3$ such that $V(H_3)\cap \{v_1',v_2',\cdots,v_t'\}=\emptyset$. Since $F-F_1-F_2=\emptyset$, $G_3'=H_3$. Since $\{x_1,y_1,x_2,y_2\}\subseteq V(H_2)$, there is at most one in $\{x_1,y_1,x_2,y_2\}$ not adjacent to $H_3$. Without loss of generality, we can assume that $w_1, w_2\in V(H_3)$ are the outer neighbor of $x_1$ and $x_2$, respectively.
Let $M$ be a maximum matching of $G_2'$ and $X$ the set of $M$-unsaturated vertices of $G_2'$ such that $X\subseteq \{x_1,x_2\}$ (it is possible that $X=\emptyset$). Let $X'$ be the outer neighbor of $X$ such that $X'\subseteq \{w_1,w_2\}$. Then the subgraph induced by $V(G_2')\cup X'$ has a perfect matching and $H_3-X'=K_2$ (in this case, $n=5$) or $H_3-X'$ is Hamiltonian by Lemma \ref{lem:3.10}. Clearly, each pair of subgraphs among $G_4',G_5',\cdots,G_n'$ has cross edges. Hence we can obtain an almost perfect integer $k$-matching of $A_{n,2}-F$.

\noindent\textbf{Subcase 4.3.} $|F_{2}|=n-4$. Then $|F_{3}|\leq 1$ and $|F_{i}|=0$ $(i\geq4)$.
Suppose that $|F_{2}'|=n-3$. Then we can assume that $v_1'\in V(G_2)$. By Lemma \ref{lem:3.10}, $G_2'$ has a Hamiltonian path, say $P$. Let $x$ and $y$ be two end-vertices of $P$. If $P$ is even, then $A_{n,2}-F$ has an almost perfect matching. Suppose that $P$ is odd. Since $x$ and $y$ are adjacent to $n-2$ subgraphs $H_3,H_4,\cdots, H_n$ and $|\{v_2',v_3',\cdots,v_t'\}|\leq n-4$ and $|F-F_1-F_2|=1$, there is a subgraph, say $G_i'$, adjacent to $x$ or $y$ in $A_{n,2}-F-\{v_2',v_3',\cdots,v_t'\}$ such that $|F_i|=0$. Suppose that $w\in V(G_i')$ and $w$ is an outer neighbor of $x$ or $y$ in $A_{n,2}-F-\{v_2',v_3',\cdots,v_t'\}$. Then the subgraph induced by $V(G_2')\cup \{w\}$ has a perfect matching.
By Lemma \ref{lem:3.10}, $G_i'-w=K_2$ (in this case, $n=5$ and $|F_i'|=1$) or $G_i'-w$ is Hamiltonian and $G_{j}'(j\geq4, j\neq i)$ is Hamiltonian.
If $i=3$, then $G_3'-w=K_2$ or $G_3'-w$ is Hamiltonian and $G_{j}'(j\geq4)$ is Hamiltonian. Thus we can obtain an almost perfect integer $k$-matching of $A_{n,2}-F$.

Now we consider the case that $i\neq 3$. For $G_3'$, we have that $|F_3'|\leq |F_3|+1\leq2$. If $|F_3'|=1$ or $n\geq 6$, then $G_3'$ is Hamiltonian by Lemma \ref{lem:3.10}. Suppose that $|F_3'|=2$ and $n=5$. Then $G_3'= K_2$ (in this case $F_3'\subset V(H_3)$) or $G_3'= K_3-e$ (in this case, $F_3'$ consist of one edge and one vertex). When $G_3'$ is Hamiltonian or $G_3'= K_2$, we can obtain an almost perfect integer $k$-matching of $A_{n,2}-F$ since $G_{j}'(j\geq4,j\neq i)$ is Hamiltonian and there are at most two pair of subgraphs among $G_3', \cdots,G_i'-w,\cdots,G_n'$ without cross edges.
Suppose that $G_3'= K_3-e$, that is, $G_3'$ is a path. Then $n=5$, $t=2$, $|F_3'|=2$ and $F=F_1\cup F_2\cup F_3$. Thus we can assume that $v_2'\in V(G_3)$. Let $u$ and $v$ be two end-vertices of $G_3'$. Without loss of generality, we can assume that $i=4$. Then $u$ or $v$ has an outer neighbor $z\in V(H_5)$. Thus the subgraph induced by $V(G_3')\cup \{z\}$ has a perfect matching and $H_5-z$ is Hamiltonian by Lemma \ref{lem:3.10}. Hence we can obtain an almost perfect integer $k$-matching of $A_{n,2}-F$.

Finally, we consider the case that $|F_{2}'|=|F_{2}|=n-4$. Then every $G_j'$ ($j\neq1,3$) is Hamiltonian. By the same discussion as above, $G_3'=K_2$ or $G_3'= K_3-e$ or $G_3'$ is Hamiltonian. By the same reason as above, $A_{n,2}-F$ has an almost perfect integer $k$-matching.
\end{proof}

\section{Acknowledgments}

This work This work is supported by the Natural Science Foundation of Guangdong (No.2021A 1515012045) and the National Natural Science Foundation of China (No.12161073).

\bibliography{bibfile1}





\end{document}